\numberwithin{equation}{section}
\newtheorem{theorem}{Theorem}[section]
\newtheorem{lemma}[theorem]{Lemma}
\newtheorem{proposition}[theorem]{Proposition}
\newtheorem{corollary}[theorem]{Corollary}
\theoremstyle{definition}
\newtheorem{definition}[theorem]{Definition} 
\newtheorem{procedure}[theorem]{Procedure} 
\newtheorem{remark}[theorem]{Remark}
\newtheorem{example}[theorem]{Example}
\begin{document}

\title[Induced matchings and the \text{v}-number of graded ideals]
{Induced matchings and the \text{v}-number of graded ideals}
\thanks{The first author was supported by a scholarship from CONACYT,
Mexico. The second and third authors were supported by SNI, Mexico.}

\author[G. Grisalde]{Gonzalo Grisalde}
\address{
Departamento de
Matem\'aticas\\
Centro de Investigaci\'on y de Estudios
Avanzados del
IPN\\
Apartado Postal
14--740 \\
07000 Mexico City, Mexico
}
\email{gjgrisalde@math.cinvestav.mx}

\author[E. Reyes]{Enrique Reyes}
\address{
Departamento de
Matem\'aticas\\
Centro de Investigaci\'on y de Estudios
Avanzados del
IPN\\
Apartado Postal
14--740 \\
07000 Mexico City, Mexico
}
\email{ereyes@math.cinvestav.mx}

\author[R. H. Villarreal]{Rafael H. Villarreal}
\address{
Departamento de
Matem\'aticas\\
Centro de Investigaci\'on y de Estudios
Avanzados del
IPN\\
Apartado Postal
14--740 \\
07000 Mexico City, Mexico
}
\email{vila@math.cinvestav.mx}

\keywords{Graded ideals, v-number, induced matchings, edge ideals, regularity,
very well-covered graphs, $W_2$-graphs, simplicial vertices.}
\subjclass[2020]{Primary 13F20; Secondary 13F55, 05C70, 05E40, 13H10.} 

\dedicatory{Dedicated to the memory of Wolmer Vasconcelos}  

\begin{abstract} 
We give a formula for the v-number of a
graded ideal that can be used to compute this number. Then we show that
for the edge ideal $I(G)$ of a graph $G$ the induced matching number of
$G$ is an upper
bound for the v-number of $I(G)$ when $G$ is very well-covered, or $G$ 
has a simplicial partition, or $G$ is well-covered connected and
contain neither $4$- nor $5$-cycles. In all these cases the v-number
of $I(G)$ is a lower bound for the regularity of the 
edge ring of $G$. 
We classify when the upper bound holds when $G$ is a
cycle, and classify when all vertices of a graph are shedding
vertices to gain insight on $W_2$-graphs.
\end{abstract}

\maketitle 

\section{Introduction}\label{intro-section}
Let $S=K[t_1,\ldots,t_s]=\bigoplus_{d=0}^{\infty} S_d$ be a polynomial ring over
a field $K$ with the standard grading and let $I$ be a graded ideal of
$S$. A prime ideal $\mathfrak{p}$ of $S$ is an \textit{associated prime} of $S/I$ if
$(I\colon f)=\mathfrak{p}$ for some $f\in S_d$, where $(I\colon f)$ is
the set of all $g\in S$ such that $gf\in I$. The set of associated primes of $S/I$ 
is denoted by ${\rm Ass}(I)$ and set of maximal elements of ${\rm Ass}(I)$ with respect to inclusion is
denoted by ${\rm Max}(I)$. The v-{\em number} of $I$, denoted ${\rm
v}(I)$, is the following invariant of
$I$ that was introduced in \cite{min-dis-generalized} to study the asymptotic behavior 
of the minimum distance of projective Reed--Muller-type codes
\cite[Corollary~4.7]{min-dis-generalized}:
$$
{\rm v}(I):=\min\{d\geq 0 \mid\exists\, f 
\in S_d \mbox{ and }\mathfrak{p} \in {\rm Ass}(I) \mbox{ with } (I\colon f)
=\mathfrak{p}\}.
$$
\quad One can define the v-number of $I$ locally at each associated 
prime $\mathfrak{p}$ of $I$\/:
$$
{\rm v}_{\mathfrak{p}}(I):=\mbox{min}\{d\geq 0\mid \exists\, f\in S_d \mbox{ with }(I\colon f)=\mathfrak{p}\}.
$$
\quad For a graded module $M\neq 0$, we define 
$\alpha(M):=\min\{\deg(f) \mid f\in M\setminus\{0\}\}$. By convention, 
we set $\alpha(0):=0$. Part (d) of the next result was shown in
\cite[Proposition~4.2]{min-dis-generalized} for unmixed graded
ideals. The next result gives a formula for the v-number of any graded
ideal.

\noindent \textbf{Theorem~\ref{vnumber-general}.}\textit{ 
Let $I\subset S$ be a graded ideal and let $\mathfrak{p}\in{\rm
Ass}(I)$. The following hold.
\begin{enumerate}
\item[(a)]
If $\mathcal{G}=\{\overline{g}_1,\ldots,\overline{g}_r\}$ is
a homogeneous minimal generating set of $(I\colon\mathfrak{p})/I$,
then  
$$
{\rm v}_{\mathfrak{p}}(I)=
\min\{\deg(g_i)\mid 1\leq i\leq r\mbox{ and }(I\colon
g_i)=\mathfrak{p}\}. 
$$
\item[(b)] ${\rm v}(I)=\min\{{\rm v}_{\mathfrak{q}}(I)\mid
\mathfrak{q}\in{\rm Ass}(I)\}$. 
\item[(c)] ${\rm
v}_{\mathfrak{p}}(I)\geq\alpha((I\colon\mathfrak{p})/I)$ with equality
if $\mathfrak{p}\in{\rm Max}(I)$.
\item[(d)] If $I$ has no embedded primes, then $
{\rm v}(I)=\min\{\alpha\left((I\colon\mathfrak{q})/{I}\right)\vert\,
\mathfrak{q}\in{\rm Ass}(I)\}.
$
\end{enumerate}}

The formulas of parts (a) and (b) give an algorithm to compute the v-number
number using \textit{Macaulay}$2$ \cite{mac2} (Example~\ref{example1},
Procedure~\ref{procedure1}). 

The v-number of non-graded ideals was used in \cite{dual} to compute the
regularity index of the minimum distance function of affine
Reed--Muller-type codes \cite[Proposition~6.2]{dual}. In this case, 
one considers the vanishing ideal of a 
set of affine points over a finite field.

For certain classes of graded ideals ${\rm v}(I)$ is a lower bound for
${\rm reg}(S/I)$, the regularity of the quotient ring $S/I$
(Definition~\ref{regularity-def}), see
\cite{min-dis-generalized,v-number,footprint-ci}. There are examples
of ideals where ${\rm v}(I)>{\rm reg}(S/I)$ \cite{v-number}. It is an open problem whether 
${\rm v}(I)\leq {\rm reg}(S/I)+1$ holds for any squarefree
monomial ideal. Upper and lower bounds for the regularity of edge
ideals and their powers are given in 
\cite{banerjee-etal,Beyarslan-etal,Dao-Huneke-schweig,Herzog-Hibi-ub,JS,edge-ideals,woodroofe-matchings},
see Section~\ref{prelim-section}.
Using the polarization technique of Fr\"oberg \cite{Fro1}, we give an
upper bound for the regularity of a monomial ideal $I$ in terms of the
dimension of $S/I$ and the exponents of the monomials that 
generate $I$ (Proposition~\ref{reg-dim-pol}). 

Let $G$ be a graph with vertex set $V(G)$ and edge
set $E(G)$. If $V(G)=\{t_1,\ldots,t_s\}$, we can regard each vertex $t_i$ as a
 variable of the polynomial ring $S=K[t_1,\ldots,t_s]$ and think of each edge $\{t_i,t_j\}$ of
 $G$ as the quadratic monomial
 $t_it_j$ of $S$. The
 \textit{edge ideal} of $G$ is the squarefree monomial ideal of $S$ defined as 
$$I(G):=(t_{i}t_{j}\mid \{t_{i},t_{j}\}\in E(G)).$$
\quad This ideal, introduced in \cite{cm-graphs}, 
has been studied in the literature from 
different perspectives, see
\cite{graphs-rings,Herzog-Hibi-book,unmixed-c-m,ITG,monalg-rev} and the
references therein. We use induced matchings of $G$ to compare the v-number of
$I(G)$ with the regularity of $S/I(G)$ for certain families of graphs.

A subset $C$ of $V(G)$ is a {\it vertex cover\/} of $G$ if every edge
of $G$ is incident with at least one vertex in $C$. A vertex cover $C$ of $G$
is {\it minimal\/} if each proper subset of $C$ is not a vertex cover
of $G$. A subset $A$ of $V(G)$ is called {\it stable\/}
if no two points in $A$ are 
joined by an edge. Note that a set of vertices $A$ is a (maximal) 
stable set of $G$ if and only if
$V(G)\setminus 
A$ is a (minimal) vertex cover of $G$. The \textit{stability number}
of $G$, denoted by $\beta_0(G)$,  
is the cardinality of a maximum stable set of $G$ and the 
{\it covering number\/} of $G$, denoted $\alpha_0(G)$, is the
cardinality of a minimum vertex cover of $G$. For use below we introduce the following  two families of stable
sets:
\begin{align*}
\mathcal{F}_G&:=\{A\mid A\mbox{ is a maximal stable set 
of }G\},\mbox{ and}\\
\mathcal{A}_G&:=\{A\mid A\mbox{ is a stable set 
of }\, G\mbox{ and }N_G(A)\mbox{ is a minimal vertex
cover of }G\}.
\end{align*}

According to \cite[Theorem~3.5]{v-number}, 
$\mathcal{F}_G\subset\mathcal{A}_G$ and the $\mathrm{v}$-number of
$I(G)$ is given by
$$
\mathrm{v}(I(G)) = \min\{\vert A \vert : A\in\mathcal{A}_G\}. 
$$
\quad The v-number of $I(G)$ is a combinatorial invariant of $G$
that has been used to characterize the family of $W_2$-graphs (see
the discussion below before Corollary~\ref{w2-graph}). We can 
define the v-number of a graph $G$ as  ${\rm v}(G):={\rm v}(I(G))$ 
and study ${\rm v}(G)$ from the viewpoint of graph theory.

A set $P$ of pairwise disjoint edges of $G$ is called a
{\it matching\/}. A matching $P=\{e_1,\ldots,e_r\}$ is \textit{perfect} if
$V(G)=\bigcup_{i=1}^re_i$. An {\it induced matching\/} of a graph $G$ is a
matching $P=\{e_1,\ldots,e_r\}$ of $G$ such that the only edges of 
$G$ contained in $\bigcup_{i=1}^re_i$ are $e_1,\ldots,e_r$. 
The \textit{matching number} of $G$, denoted $\beta_1(G)$, 
is the maximum cardinality of a matching of $G$ and the
{\it induced matching number\/} of $G$,
denoted ${\rm im}(G)$, is the number of edges in the largest
induced matching. 

The graph $G$ is
{\it well-covered\/} 
if every maximal stable set of $G$ is of the same size 
and $G$ is  \textit{very
well-covered} if $G$ is well-covered, has no isolated vertices, 
and $|V(G)|=2\alpha_0(G)$. The class of very well-covered graphs
includes the bipartite well-covered graphs without isolated vertices \cite{ravindra,unmixed} and
the whisker graphs \cite[p.~392]{ITG} (Lemma~\ref{bipartite-whiskers}). A
graph without isolated vertices is very well-covered if and only
if $G$ is well-covered and $\beta_1(G)=\alpha_0(G)$
(Proposition~\ref{konig-vwc}). One of
the properties of very well-covered graphs that will be used
to show the following theorem is that they can be classified using combinatorial
properties of a perfect matching as was shown by Favaron \cite[Theorem~1.2]{favaron}
(Theorem~\ref{konig}, cf.~Theorem~\ref{lemma-Ivan}). 

We come to one of our main results.

\noindent \textbf{Theorem~\ref{Domi-InduceMatch}.}\textit{
Let $G$ be a very well-covered graph and let $P=\{e_1, \ldots, e_r\}$
be a perfect matching of $G$. 
Then, there is an induced submatching $P'$ of $P$ and $D \in {\mathcal A}_G$
such that $D \subset V(P')$ and $\vert e \bigcap D \vert = 1$ for
each $e\in P'$. 
Furthermore ${\rm v}(I(G))\leq|P'|=|D|\leq{\rm im}(G)\leq{\rm
reg}(S/I(G))$.
}

Let $G$ be a graph and let $W_G$ be its whisker graph 
(Section~\ref{prelim-section}).  
As a consequence we recover a result of \cite{v-number} showing that
the v-number of 
$I(W_G)$ is bounded
from above by the regularity of the quotient ring $K[V(W_G)]/I(W_G)$
(Corollary~\ref{sep29-21}).  The \textit{independent
domination number} of $G$, denoted by $i(G)$, is the minimum size of 
a maximal stable set \cite[Proposition~2]{Allan-Laskar}: 
$$i(G):=\min\{|A|\colon
A\in\mathcal{F}_G\},$$ 
and $i(G)$ is equal to the v-number of the whisker graph $W_G$ of $G$
\cite[Theorem 3.19(a)]{v-number}.

A cycle of length $s$ is denoted by $C_s$. 
The inequality ${\rm v}(I(G))\leq{\rm reg}(S/I(G))$ of
Theorem~\ref{Domi-InduceMatch}  
is false if we only assume that $G$ is a well-covered graph,
since the cycle $C_5$ is a well-covered graph, but one has ${\rm im}(C_5)=1 <
2={\rm v}(I(C_5))$. We prove that $C_5$ is the only cycle 
where the inequality ${\rm v}(I(C_s))\leq{\rm im}(C_s)$ fails.

\noindent \textbf{Theorem~\ref{cycles-indmat}.}\textit{
Let $C_s$ be an $s$-cycle and let $I(C_s)$ be its edge ideal. Then,
${\rm v}(I(C_s))\leq{\rm im}(C_s)$ if and only if $s \neq 5$.
}

If $v\in V(G)$, we denote the closed neighborhood of $v$ by $N_G[v]$.
A vertex $v$ of $G$ is called {\it simplicial} if the induced 
subgraph $H=G[N_{G}[v]]$ on the vertex set $N_G[v]$ is a complete graph. A subgraph $H$ of $G$ is
called a \textit{simplex} if $H=G[N_{G}[v]]$ for some simplicial
vertex $v$. A graph $G$ is \textit{simplicial} if every vertex of $G$ 
is either simplicial
or is adjacent to a simplicial vertex of $G$.      

If $A$ is a
stable set of a graph $G$, $H_i$ is a complete subgraph of $G$ for
$i=1,\ldots,r$ and $A\bigcup \{V(H_i)\}_{i=1}^r$ is a partition of
$V(G)$, then ${\rm reg}(S/I(G))\leq r$
\cite[Theorem~2]{woodroofe-matchings}. We consider a special type of
partitions of $V(G)$ that allow us to link $\mathcal{A}_G$ with induced
matchings of $G$. A graph $G$ has a \textit{simplicial partition} if $G$ has simplexes $H_1, \ldots, H_r$,
such that $\{V(H_i)\}_{i=1}^r$ is a partition of $V(G)$. 
Our next
result shows that ${\rm v}(I(G))\leq{\rm im}(G)$ if $G$ has a
simplicial partition.

\noindent \textbf{Theorem~\ref{Domi-InduceMatch-simplex}.}\textit{
Let $G$ be a graph with simplexes $H_1, \ldots, H_r$,
such that $\{V(H_i)\}_{i=1}^r$ is a partition of $V(G)$.
If $G$ has no isolated vertices, then there
is $D=\{y_1,\ldots,y_k\} \in {\mathcal A}_G$,  
and there are simplicial vertices $x_1,\ldots,x_k$ of $G$ and integers
$1\leq j_1< \cdots <j_k \leq r$ such that $P=\{\{x_i,y_i\}\}_{i=1}^k$ is an induced matching of $G$ and
 $H_{j_i}$ is the induced subgraph $G[N_G[x_i]]$ on $N_G[x_i]$ for $i=1,\ldots,k$. Furthermore ${\rm
 v}(I(G))\leq|D|=|P|\leq{\rm im}(G)\leq{\rm reg}(S/I(G))$. 
}

As a consequence, using a result of Finbow, Hartnell and Nowakowski 
that classifies the connected well-covered graphs without $4$- and
$5$-cycles \cite[Theorem 1.1]{Finbow2}
(Theorem~\ref{wellcovered-characterization1}), we show two more
families of graphs where the v-number is a lower bound for the
regularity.

\noindent \textbf{Corollary~\ref{simplicial-4-5-cycles}.}\textit{
Let $G$ be a well-covered graph and let $I(G)$ be its edge ideal. If $G$ is simplicial
or $G$ is connected and contain neither $4$- nor $5$-cycles, then 
$${\rm v}(I(G))\leq{\rm im}(G)
\leq{\rm reg}(S/I(G))\leq\beta_0(G).$$
}

A vertex $v$ of a graph $G$ is called a \textit{shedding vertex} if 
 each stable set of $G\setminus N_{G}[v]$ is not a maximal stable set
 of $G\setminus v$. We prove that every vertex of $G$ is a shedding
 vertex if and only if ${\mathcal A}_{G}={\mathcal F}_{G}$
 (Proposition~\ref{Shedding-stable}).

A graph $G$ belongs to class $W_2$ if
$|V(G)|\geq 2$ and any two disjoint stable sets $A_1,A_2$ are contained in two 
disjoint maximum stable sets $B_1, B_2$ with $|B_i|=\beta_0(G)$ for
$i=1,2$. 
A graph $G$ is in $W_2$ if and only if $G$ is well-covered, 
$G\setminus v$ is well-covered for all $v\in
V(G)$ and $G$ has no isolated vertices  \cite[Theorem~2.2]{Levit-Mandrescu}.  A graph $G$
without isolated vertices is in $W_2$ if and only if ${\rm
v}(I(G))=\beta_0(G)$ \cite[Theorem~4.5]{v-number}. As an
application we recover the only if implication of this result
(Corollary~\ref{w2-graph}). Using that a graph $G$ without isolated vertices is in
$W_2$ if and only if $G$ is well-covered and ${\mathcal
A}_{G}={\mathcal F}_{G}$ \cite[Theorem 4.3]{v-number}, by 
Proposition~\ref{Shedding-stable}, we recover the
fact that a graph $G$ without isolated vertices is in $W_2$ if and only if $G$ is well-covered and every
$v\in V(G)$ is a shedding vertex \cite[Theorem 3.9]{Levit-Mandrescu}. For other
characterizations of graphs in $W_2$ see
\cite{Levit-Mandrescu,Staples} and the references therein.

In Section~\ref{examples-section} we show examples illustrating some
of our results. In particular in Example~\ref{example2} we compute the
combinatorial and algebraic invariants of the well-covered graphs
$C_7$ and $T_{10}$ that are 
depicted in Figure~\ref{C7-T10}. These two graphs occur in the
classification of connected well-covered graphs without $4$- and
$5$-cycles \cite[Theorem 1.1]{Finbow2}
(Theorem~\ref{wellcovered-characterization1}). A related result is the
characterization of
well-covered graphs of girth at least $5$ given in \cite{finbow1}.

For all unexplained terminology and additional information,  we refer to 
\cite{diestel,Har} for the theory of graphs and
\cite{graphs-rings,Herzog-Hibi-book,monalg-rev} for the theory of
edge ideals and monomial ideals.

\section{Preliminaries}\label{prelim-section} 

In this section we give some definitions and present some well-known
results that will be used in the following sections.  
To avoid repetitions, we continue to employ 
the notations and
definitions used in Section~\ref{intro-section}.

\begin{definition}\cite{eisenbud-syzygies}\label{regularity-def} Let $I\subset S$ be a graded ideal and let
${\mathbf F}$ be the minimal graded free resolution of $S/I$ as an
$S$-module:
\[
{\mathbf F}:\ \ \ 0\rightarrow
\bigoplus_{j}S(-j)^{b_{g,j}}
\stackrel{}{\rightarrow} \cdots
\rightarrow\bigoplus_{j}
S(-j)^{b_{1,j}}\stackrel{}{\rightarrow} S
\rightarrow S/I \rightarrow 0.
\]
The {\it Castelnuovo--Mumford regularity\/} of $S/I$ ({\it
regularity} of $S/I$ for short) is defined as
$${\rm reg}(S/I):=\max\{j-i \mid b_{i,j}\neq 0\}.
$$
\quad The integer $g$, denoted ${\rm pd}(S/I)$, is the \textit{projective dimension} of $S/I$.  
\end{definition}

Let $G$ be a graph with vertex set $V(G)$. Given $A\subset V(G)$, 
the \textit{induced subgraph} on $A$, denoted $G[A]$, is the maximal
subgraph of $G$ with vertex set $A$. The edges of $G[A]$ are all the edges of $G$ that are
contained in $A$. The induced subgraph $G[V(G)\setminus A]$ of $G$ on the
vertex set $V(G)\setminus A$ is denoted by
$G\setminus A$.  
If $v$ is a vertex of $G$, then we denote the neighborhood of
$v$ by $N_G(v)$ and the closed neighborhood $N_G(v)\bigcup\{v\}$
of $v$ by $N_G[v]$. Recall that $N_G(v)$ is the set of all vertices 
of $G$ that are adjacent to $v$. If $A\subset V(G)$, we 
set $N_G(A):=\bigcup_{a\in A} N_G(a)$. 

\begin{theorem}\cite{Campbell}\label{Campbell-theo} 
If a graph $G$ is well-covered and is not complete, then
$G_v:=G\setminus N_G[v]$ is well-covered
for all $v$ in $V(G)$. Moreover, $\beta_0(G_v) = \beta_0(G) - 1$.
\end{theorem}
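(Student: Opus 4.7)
The plan is to show that every maximal stable set of $G_v$ has size exactly $\beta_0(G)-1$, which simultaneously yields that $G_v$ is well-covered and that $\beta_0(G_v)=\beta_0(G)-1$. The bridge between $G_v$ and $G$ is the operation $A \mapsto A \cup \{v\}$, and the well-coveredness of $G$ does all the heavy lifting once this correspondence is set up.

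First I would take an arbitrary maximal stable set $A$ of $G_v$ and argue that $A \cup \{v\}$ is a maximal stable set of $G$. Stability is immediate: $A$ is stable in the induced subgraph $G_v$ (hence in $G$), and no vertex of $A$ lies in $N_G(v)$ since $A \subset V(G)\setminus N_G[v]$. For maximality, suppose some $w \in V(G)\setminus(A\cup\{v\})$ could be adjoined while preserving stability in $G$; then $w \notin N_G(v)$, forcing $w \in V(G_v)\setminus A$, so $A \cup \{w\}$ would be stable in $G_v$, contradicting the maximality of $A$ there. The well-covered hypothesis on $G$ then gives $|A|+1=|A\cup\{v\}|=\beta_0(G)$, and since $A$ was arbitrary, every maximal stable set of $G_v$ has cardinality $\beta_0(G)-1$, as required.

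The only subtlety I would need to address is the possibility that $V(G_v)=\emptyset$, in which case $\beta_0(G_v)=0$ and the desired equality would force $\beta_0(G)=1$. But $\beta_0(G)=1$ means that any two distinct vertices of $G$ are adjacent (otherwise they would form a stable set of size $2$), i.e., $G$ is complete, contradicting the hypothesis. Hence $V(G_v)$ is non-empty and the argument above applies. I expect this corner case to be the only real obstacle; the core argument is a direct application of well-coveredness after lifting a maximal stable set $A$ of $G_v$ to the maximal stable set $A\cup\{v\}$ of $G$.
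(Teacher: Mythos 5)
Your main argument is correct and is the standard one: the paper itself gives no proof of this statement (it is quoted from Campbell's dissertation), so there is nothing to compare against, but the correspondence $A\mapsto A\cup\{v\}$ sending maximal stable sets of $G_v$ to maximal stable sets of $G$ is exactly the right bridge, and both the stability and the maximality verifications are sound. The one place where your write-up slips is the corner case $V(G_v)=\emptyset$: you argue that ``the desired equality would force $\beta_0(G)=1$,'' which uses the conclusion of the theorem to rule out the case rather than deriving a contradiction from the hypotheses. The repair is immediate and you already have all the pieces: if $N_G[v]=V(G)$, then $\{v\}$ is itself a maximal stable set of $G$ (every other vertex is adjacent to $v$), so well-coveredness gives $\beta_0(G)=1$ and hence $G$ complete, contradicting the hypothesis. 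In fact this is just the instance $A=\emptyset$ of your main lemma, since $\emptyset$ is the unique maximal stable set of the empty graph and $\emptyset\cup\{v\}=\{v\}$ is then maximal in $G$; so no separate treatment of the corner case is really needed once that is observed.
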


If $G$ is a graph, then $\beta_1(G)\leq \alpha_0(G)$. We say that $G$
is a \textit{K\H{o}nig graph} if $\beta_1(G)=\alpha_0(G)$. This notion
can be used to classify very well-covered graphs
(Proposition~\ref{konig-vwc}).

\begin{theorem}\label{lemma-Ivan}{\rm
(\cite[Theorem 5]{disc-math}, \cite[Lemma~2.3]{susan-reyes-vila})} 
Let $G$ be a graph without isolated vertices. If $G$ is a graph
without $3$-, $5$-, and $7$-cycles or $G$ is a K\H{o}nig graph, then $G$ is
well-covered if and only if $G$ is very well-covered. 
\end{theorem}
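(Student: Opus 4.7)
The reverse direction (very well-covered $\Rightarrow$ well-covered) is immediate from the definitions, so the task is to show that, under either hypothesis and with no isolated vertices, a well-covered $G$ satisfies $|V(G)|=2\alpha_0(G)$. The plan is to reduce both cases to producing a perfect matching. Indeed, if $M$ is a perfect matching then $\beta_1(G)=|V(G)|/2$, and since any stable set meets each edge of $M$ in at most one vertex, $\beta_0(G)\leq|V(G)|/2$, whence $\alpha_0(G)=|V(G)|-\beta_0(G)\geq|V(G)|/2$. Combined with the standard inequality $\alpha_0(G)\geq\beta_1(G)$, the identity $\alpha_0(G)=|V(G)|/2$ is equivalent to K\H{o}nig-ness. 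So it suffices to produce a perfect matching in the K\H{o}nig case, and a perfect matching together with K\H{o}nig-ness in the odd-girth case.

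\textit{K\H{o}nig case.} Assume $\alpha_0(G)=\beta_1(G)$ and suppose for contradiction $|V(G)|>2\alpha_0(G)$. Take a maximum matching $M$ with $|M|=\alpha_0(G)$ and let $U$ be the set of $M$-unsaturated vertices, so $|U|=|V(G)|-2\alpha_0(G)>0$. Maximality of $M$ forces $U$ stable; extend it to a maximal stable set $I\supseteq U$ of size $\beta_0(G)=|V(G)|-\alpha_0(G)$. A direct count yields $|I\setminus U|=\alpha_0(G)=|M|$, so each edge of $M$ contributes exactly one endpoint to $I$; label $M=\{\{x_i,y_i\}\}$ with $x_i\in I$, $y_i\notin I$, so that $N_G(U)\subseteq\{y_1,\ldots,y_{|M|}\}$. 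Running an $M$-alternating BFS from a vertex $u\in U$, the absence of augmenting paths (by maximality of $M$) combined with the K\H{o}nig hypothesis, which rules out the blossom-type odd-cycle obstructions that would otherwise complicate the non-bipartite alternating-tree analysis, allows a layer-swap producing a stable set strictly extending $I$. This contradicts maximality of $I$, so $U=\emptyset$ and $M$ is perfect.

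\textit{No $3$-, $5$-, $7$-cycle case.} Fix a maximal stable set $S$, of size $\beta_0(G)$ by well-coveredness. Triangle-freeness makes $N_G(s)$ stable and contained in $V(G)\setminus S$ for each $s\in S$. The plan is to establish Hall's condition on the bipartite graph $B$ between $V(G)\setminus S$ and $S$: if some $T\subseteq V(G)\setminus S$ had $|N_B(T)|<|T|$, one could locally modify $S$, using the absence of $C_5$ and $C_7$ to close off the short alternating cycles that such an obstruction would entail, producing a maximal stable set of a different size from $S$ and contradicting well-coveredness. Hall's theorem then supplies a matching of $V(G)\setminus S$ into $S$ of size $\alpha_0(G)$, yielding $\beta_1(G)=\alpha_0(G)$ (K\H{o}nig); a second application of the odd-girth argument forces the matching to be perfect. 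The main obstacle, in both cases, is converting the structural hypothesis (K\H{o}nig or odd-girth) into the precise combinatorial exclusions needed to perform the layer-swap or verify Hall's condition.
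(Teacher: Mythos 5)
Your reduction to ``produce a perfect matching (plus K\H{o}nig-ness)'' is sound, and the counting in the K\H{o}nig case ($U$ stable, $|I\setminus U|=|M|$, $N_G(U)\subseteq\{y_1,\ldots,y_{|M|}\}$) is correct as far as it goes. But both halves of the argument stop exactly where the work begins, and you acknowledge as much in your last sentence. In the K\H{o}nig case, the claim that ``the K\H{o}nig hypothesis \ldots\ rules out the blossom-type odd-cycle obstructions \ldots\ allows a layer-swap producing a stable set strictly extending $I$'' is an assertion, not a proof: $G$ is not assumed bipartite, so the alternating-tree/layer-swap step you invoke is precisely the hard part (it would amount to a Deming-type structural analysis of K\H{o}nig--Egerv\'ary graphs), and nothing in the proposal shows that the swap produces a stable set at all, let alone one strictly containing the maximal stable set $I$. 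In the odd-girth case, ``one could locally modify $S$, using the absence of $C_5$ and $C_7$ to close off the short alternating cycles'' is likewise the entire content of the cited theorem of Randerath and Vestergaard; verifying Hall's condition for well-covered graphs without $3$-, $5$-, and $7$-cycles is a genuinely delicate case analysis that cannot be waved through. As written, the proposal is a plausible plan with both key lemmas missing.

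Note also that the K\H{o}nig half does not need alternating paths at all. The paper proves it (Proposition~\ref{konig-vwc}) in a few lines: if a maximum matching $M$ with $|M|=\beta_1(G)=\alpha_0(G)$ misses a vertex $v$, then since $v$ is not isolated it lies in some minimal vertex cover $C$; well-coveredness forces $|C|=\alpha_0(G)=|M|$, yet $C$ must contain at least one vertex from each of the $|M|$ disjoint edges of $M$ in addition to $v$, so $|C|\geq |M|+1$, a contradiction. Replacing your BFS/layer-swap black box with this argument would complete that half; the odd-girth half still requires the substantive argument from the cited literature.
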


\begin{definition}\label{p-def1}
A perfect matching $P$ of a
graph $G$ 
is said to have property {\bf(P)} 
if for all $\{a,b\}$, $\{a^{\prime},b^{\prime}\}\in E(G)$, and
$\{b,b^{\prime}\}\in P$, one has $\{a,a^{\prime}\}\in E(G)$.
\end{definition}

\begin{remark}\label{sep26-19}
Let $P$ be a perfect matching of a graph $G$ with property {\bf(P)}.
Note that if $\{b,b'\}$ is in $P$ and $a\in V(G)$, then
$\{a,b\}$ and $\{a,b'\}$ cannot be both in $E(G)$ because $G$ 
has no loops. In other words $G$ has no triangle containing 
an edge in $P$.
\end{remark}

\begin{theorem}\label{konig}{\rm \cite[Theorem 1.2]{favaron}}
The following conditions are equivalent for a graph $G$: 
\begin{enumerate}
\item $G$ is very well-covered.
\item $G$ has a perfect matching with property {\bf{(P)}}. 
\item $G$ has a perfect matching, and each perfect matching of $G$ has
property {\bf{(P)}}.
\end{enumerate}
\end{theorem}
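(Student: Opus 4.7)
The plan is to prove the cycle $(3)\Rightarrow(2)\Rightarrow(1)\Rightarrow(3)$. The implication $(3)\Rightarrow(2)$ is immediate since $(3)$ includes the existence of a perfect matching with property {\bf(P)}.

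For $(2)\Rightarrow(1)$, I would fix a perfect matching $P$ of $G$ with property {\bf(P)}; then $G$ has no isolated vertices and $|V(G)|=2|P|$. It suffices to show every maximal stable set $A$ of $G$ satisfies $|A|=|P|$, because this forces $\beta_0(G)=|V(G)|/2$ and hence $|V(G)|=2\alpha_0(G)$. Stability yields $|A\cap\{b,b'\}|\le 1$ for each $\{b,b'\}\in P$. If some pair had $A\cap\{b,b'\}=\emptyset$, maximality of $A$ would produce $a,a'\in A$ with $\{a,b\},\{a',b'\}\in E(G)$; Remark~\ref{sep26-19} would force $a\ne a'$, and property {\bf(P)} would then force $\{a,a'\}\in E(G)$, contradicting stability of $A$. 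Hence each pair of $P$ meets $A$ in exactly one vertex and $|A|=|P|$.

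For $(1)\Rightarrow(3)$, I would handle two substeps. \emph{Substep B} (every perfect matching has property {\bf(P)}): take any perfect matching $P$, $\{b,b'\}\in P$, and edges $\{a,b\},\{a',b'\}\in E(G)$, and suppose toward contradiction $\{a,a'\}\notin E(G)$ (this also covers the degenerate case $a=a'$, in which the singleton $\{a\}$ is vacuously not an edge). Then $\{a,a'\}$ is stable; extend it to a maximal stable set $A$. Well-coveredness gives $|A|=\beta_0(G)=|V(G)|/2=|P|$. Since $a\in A$ is adjacent to $b$ we have $b\notin A$, and similarly $b'\notin A$; so the pair $\{b,b'\}$ contributes $0$ to $A$ and every other pair at most $1$, whence $|A|\le|P|-1$, a contradiction.

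The hard step is \emph{Substep A} (existence of a perfect matching of $G$). Fixing a maximum stable set $A$ and setting $C:=V(G)\setminus A$, so that $|A|=|C|=|V(G)|/2$, one is tempted to use Hall's theorem on the bipartite graph of $G$-edges between $A$ and $C$. For a \emph{stable} $T\subseteq C$, the set $(A\setminus N_G(T))\cup T$ is stable in $G$, and extending it to a maximal stable set of size $|A|$ forces $|N_G(T)\cap A|\ge|T|$, i.e., Hall's condition holds on stable subsets. The main obstacle is that an arbitrary $T\subseteq C$ may carry internal edges, and the same trick then only yields $|N_G(T)\cap A|\ge\beta_0(G[T])$; to complete the step one has to either promote this to Hall's condition on all $T$ (for example, inductively reducing via Theorem~\ref{Campbell-theo} to smaller very well-covered graphs) or invoke the Tutte--Berge formula on $G$ itself, verifying that the rigidity imposed by $|V(G)|=2\alpha_0(G)$ prevents a Tutte set. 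Everything else follows cleanly from stability, maximality, property {\bf(P)}, and Remark~\ref{sep26-19}.
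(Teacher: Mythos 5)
This is Favaron's theorem; the paper does not prove it but imports it as \cite[Theorem 1.2]{favaron}, so there is no in-paper argument to compare yours against, and your proposal has to stand on its own. Most of it does: $(3)\Rightarrow(2)$ is trivial; your $(2)\Rightarrow(1)$ is complete and correct (each edge of $P$ meets a maximal stable set in exactly one vertex, using Remark~\ref{sep26-19} to rule out $a=a'$ and property \textbf{(P)} to rule out $a\neq a'$); and your Substep~B correctly shows that \emph{if} a very well-covered graph has a perfect matching, then every perfect matching has property \textbf{(P)}, including the degenerate case where one vertex sees both ends of a matching edge.

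The genuine gap is Substep~A: the existence of a perfect matching in a very well-covered graph, which is the substantive content of $(1)\Rightarrow(3)$, is not proved. You candidly flag the obstacle — Hall's condition on the bipartite graph between a maximum stable set $A$ and $C=V(G)\setminus A$ is only verified for \emph{stable} $T\subseteq C$, and for general $T$ your argument yields merely $|N_G(T)\cap A|\geq\beta_0(G[T])$ — but neither of the two repairs you gesture at is carried out, and neither is routine. The induction via Theorem~\ref{Campbell-theo} does not obviously close: $G\setminus N_G[v]$ loses $|N_G[v]|$ vertices while $\beta_0$ drops by only one, so it is very well-covered only when $\deg_G(v)=1$, and very well-covered graphs need not have degree-one vertices (already $C_4$ fails, where $G\setminus N_G[v]$ is an isolated vertex). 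The Tutte--Berge route needs a lower bound on the independence numbers of the odd components of $G-U$, and since induced subgraphs of well-covered graphs need not be well-covered, this does not follow from the hypotheses without further structural work. As it stands, the hardest implication of the theorem rests on an unproved step, so the proposal is incomplete; you would need either to supply a correct argument for the existence of the perfect matching or to cite it (as the paper does) from \cite{favaron}.
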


Let $G$ be a graph with vertex set $V(G)=\{t_1,\ldots,t_s\}$ and let 
$U=\{u_1,\ldots, u_s\}$ be a new set of vertices. The {\it whisker graph} or {\it
suspension\/} of $G$, denoted by
$W_G$, is the graph obtained from $G$ by attaching to each 
vertex $t_i$ a new vertex $u_i$ and a new edge $\{t_i,u_i\}$.  
The edge $\{t_i,u_i\}$ is called a {\it whisker} or \textit{pendant
edge}. The graph $W_G$ was 
introduced in \cite{ITG} as a device to study the numerical invariants and
properties of graphs and edge ideals.  

\begin{lemma}\label{bipartite-whiskers}
Let $G$ be a graph without isolated vertices. The following hold.
\begin{itemize}
\item[(a)] If $G$ is a bipartite well-covered graph, 
then $G$ is very well-covered. 
\item[(b)] The whisker graph $W_G$ of $G$ is very well-covered.
\end{itemize}
\end{lemma}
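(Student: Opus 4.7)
For part (a), the plan is a one-line reduction to Theorem~\ref{lemma-Ivan}: by K\H{o}nig's classical theorem, every bipartite graph satisfies $\beta_1(G)=\alpha_0(G)$, so $G$ is a K\H{o}nig graph. Combined with the hypotheses that $G$ is well-covered and has no isolated vertices, the K\H{o}nig case of Theorem~\ref{lemma-Ivan} immediately yields that $G$ is very well-covered. The only subtlety is to notice that the ``no isolated vertices'' assumption is precisely what lets us apply Theorem~\ref{lemma-Ivan}.

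For part (b), I plan to work directly from the definition. The graph $W_G$ has $2s$ vertices, where $s=|V(G)|$, and it has no isolated vertices since each $u_i$ is adjacent to $t_i$. It therefore suffices to show that every maximal stable set of $W_G$ has cardinality exactly $s$; this simultaneously establishes that $W_G$ is well-covered with $\beta_0(W_G)=s$ and that $\alpha_0(W_G)=|V(W_G)|-\beta_0(W_G)=s$, i.e., $|V(W_G)|=2\alpha_0(W_G)$.

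The central step is the observation that for any maximal stable set $A$ of $W_G$ and every index $i$, the intersection $A\cap\{t_i,u_i\}$ has size exactly one: both vertices cannot lie in $A$ because they are adjacent, and if neither did, then the pendant $u_i$, whose only neighbor in $W_G$ is $t_i\notin A$, could be added to $A$, contradicting maximality. Summing over $i$ gives $|A|=s$.

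An equally short alternative for (b) is to invoke Theorem~\ref{konig}: the whisker edges $P=\{\{t_i,u_i\}\}_{i=1}^{s}$ form a perfect matching of $W_G$, and since each $u_i$ is a leaf, property {\bf(P)} for $P$ is immediate (with $\{b,b'\}=\{t_i,u_i\}\in P$ and, say, $b'=u_i$, the only candidate for $a'$ is $t_i$, which makes the required adjacency $\{a,a'\}=\{a,t_i\}$ automatic). I do not anticipate any real obstacle in either part; the main point to watch in (b) is to handle both orderings of the endpoints of an edge of $P$ when checking property {\bf(P)}, but by symmetry this reduces to the case just described.
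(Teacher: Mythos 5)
Your proposal is correct, but it reaches the conclusion by a somewhat different route than the paper, so a comparison is in order. For part (a) the paper does not pass through K\H{o}nig's theorem at all: it cites the structural result that a bipartite well-covered graph without isolated vertices possesses a perfect matching satisfying property {\bf(P)} (from the reference on unmixed bipartite graphs) and then applies Favaron's characterization, Theorem~\ref{konig}. Your argument instead observes that K\H{o}nig's classical theorem makes every bipartite graph a K\H{o}nig graph in the sense of the paper, and then invokes the K\H{o}nig case of Theorem~\ref{lemma-Ivan}; this is equally valid (Theorem~\ref{lemma-Ivan} is an external citation, so there is no circularity) and arguably uses a more widely known input, at the cost of routing through a different quoted theorem than the authors intended. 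For part (b) the paper's proof is exactly your ``equally short alternative'': the whisker edges form a perfect matching with property {\bf(P)}, and Theorem~\ref{konig} applies. Your primary argument for (b) --- showing directly that every maximal stable set of $W_G$ meets each pair $\{t_i,u_i\}$ in exactly one vertex, hence has size $s$, which gives both well-coveredness and $|V(W_G)|=2\alpha_0(W_G)$ --- is a correct, self-contained alternative that avoids Favaron's theorem entirely; it is more elementary but slightly longer. Your verification of property {\bf(P)} for the whisker matching, including the symmetry between the two orderings of the endpoints, is also sound.
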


\begin{proof} (a) A bipartite well-covered graph without
isolated vertices has a perfect matching $P$ that satisfies 
property {\bf(P)} \cite[Theorem~1.1]{unmixed}. Thus, by
Theorem~\ref{konig}, $G$ is very well-covered. 

(b): The perfect matching $P=\{\{t_i,u_i\}\}_{i=1}^n$ of the whisker 
graph $W_G$ 
satisfies property {\bf(P)} and, by
Theorem~\ref{konig}, $G$ is very well-covered.
\end{proof}


\begin{proposition}\cite[Lemma~17]{Ivan-Reyes}\label{konig-vwc} Let $G$ be a graph without
isolated vertices. Then, $G$ is a very well-covered graph if and only
if $G$ is well-covered and $\beta_1(G)=\alpha_0(G)$.
\end{proposition}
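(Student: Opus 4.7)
The plan is to treat each direction separately, using Favaron's perfect matching characterization (Theorem~\ref{konig}) for the forward direction and reducing the reverse direction to the K\H{o}nig-graph clause of Theorem~\ref{lemma-Ivan}.

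For $(\Rightarrow)$, suppose $G$ is very well-covered. Well-coveredness is part of the definition, so it remains to establish $\beta_1(G)=\alpha_0(G)$. By Theorem~\ref{konig} the graph $G$ admits a perfect matching $M$, whence
\[
\beta_1(G)\;\geq\;|M|\;=\;|V(G)|/2\;=\;\alpha_0(G),
\]
the last equality coming from the defining identity $|V(G)|=2\alpha_0(G)$ of very well-covered graphs. Together with the universal bound $\beta_1(G)\leq\alpha_0(G)$, this gives equality.

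For $(\Leftarrow)$, suppose $G$ is well-covered, has no isolated vertices, and $\beta_1(G)=\alpha_0(G)$; the last identity says $G$ is a K\H{o}nig graph without isolated vertices. By the K\H{o}nig-graph clause of Theorem~\ref{lemma-Ivan}, well-coveredness and very well-coveredness are equivalent for such $G$, so the well-covered hypothesis yields that $G$ is very well-covered.

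The substance of the argument lies in the reverse direction, which is handled by Theorem~\ref{lemma-Ivan}. If one preferred a self-contained proof, the task would be to show $|V(G)|=2\alpha_0(G)$, equivalently that some (hence every) maximum matching $M$ of $G$ is perfect. A natural approach is by contradiction: assume the set $U$ of $M$-unmatched vertices is nonempty. Then $U$ is stable (else $M$ could be enlarged), so extending $U$ to a maximum stable set $A$ of size $\beta_0(G)=|V(G)|-\alpha_0(G)$ and noting $|V(G)\setminus A|=\alpha_0(G)=|M|$, one sees that each $M$-edge has exactly one endpoint in the minimum vertex cover $V(G)\setminus A$. For $v\in U$ with neighbor $u\in V(G)\setminus A$ and $M$-partner $u'\in A$, the absence of an $M$-augmenting path $v$--$u$--$u'$--$v''$ forces $u'$ to have no neighbor in $U\setminus\{v\}$, while the stability of $A$ gives $u'\not\sim v$ as well; the crux, which is exactly where \cite[Lemma~17]{Ivan-Reyes} does its real work, is to convert this structural rigidity into a contradiction with the well-coveredness of $G$ (for instance, by constructing either a stable set of size strictly larger than $\beta_0(G)$ or a maximal stable set of size smaller than $\beta_0(G)$).
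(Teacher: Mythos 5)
Your proof is correct, but the reverse direction takes a different route from the paper. The forward direction is identical: Favaron's theorem (Theorem~\ref{konig}) gives a perfect matching, and $|V(G)|=2\alpha_0(G)$ turns it into a matching of size $\alpha_0(G)$. For the converse, you reduce to the K\H{o}nig-graph clause of Theorem~\ref{lemma-Ivan}, which is legitimate (that theorem is an independently cited result and your appeal to it is not circular), whereas the paper gives a short self-contained argument: let $P$ be a maximum matching with $|P|=\beta_1(G)=\alpha_0(G)$; if some vertex $v$ lies outside $V(P)$, then since $v$ is not isolated it belongs to a \emph{minimal} vertex cover $C$, well-coveredness forces $|C|=\alpha_0(G)$, yet $C$ must meet each of the $\alpha_0(G)$ disjoint edges of $P$ and also contain $v\notin V(P)$, so $|C|\geq\alpha_0(G)+1$, a contradiction; hence $P$ is perfect and $|V(G)|=2\alpha_0(G)$. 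This is essentially the missing "crux" of your alternative sketch: rather than extending the set $U$ of unmatched vertices to a maximum stable set (whose complementary cover avoids $U$ and yields no immediate contradiction, forcing you into augmenting-path considerations you do not complete), one should choose the cover so that it \emph{contains} an unmatched vertex --- e.g.\ extend a neighbor of $v$ to a maximal stable set and take the complement. Your citation-based proof buys brevity; the paper's argument buys self-containment and shows the statement needs nothing beyond the definition of well-covered.
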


\begin{proof}
$\Rightarrow$) Assume that $G$ is very well-covered, then
$|V(G)|=2\alpha_0(G)$. It suffices to show that $\beta_1(G)=\alpha_0(G)$. In
general $\beta_1(G)\leq \alpha_0(G)$. By Theorem~\ref{konig}, $G$ has
a perfect matching $P=\{e_1,\ldots,e_r\}$. Then,
$|V(G)|=2r=2\alpha_0(G)$ and $r=\alpha_0(G)$. Thus,
$\alpha_0(G)=|P|\leq\beta_1(G)$, and one has $\alpha_0(G)=\beta_1(G)$.

$\Leftarrow$) Assume that $G$ is well-covered and
$\beta_1(G)=\alpha_0(G)$. Let $P=\{e_1,\ldots,e_r\}$ be a matching of $G$ with 
$r=\beta_1(G)$. We need only show that $|V(G)|=2\alpha_0(G)$. Clearly
$|V(G)|$ is greater than or equal 
to $2\alpha_0(G)$ because $\bigcup_{i=1}^re_i\subset V(G)$. We proceed by contradiction assuming that
$\bigcup_{i=1}^re_i\subsetneq V(G)$. Pick $v\in
V(G)\setminus\bigcup_{i=1}^re_i$. As $v$ is not an isolated vertex of
$G$, there is a minimal vertex cover $C$ of $G$ that contains $v$. As
$G$ is well-covered one has that $|C|=\alpha_0(G)=r$. Since $e_i\bigcap
C\neq\emptyset$ for $i=1,\ldots,r$ and $v\in C$, we get $|C|\geq r+1$,
a contradiction. 
\end{proof}

We say that a graph $G$ is in the family $\mathcal{F}$ if there 
exists $\{x_1,\ldots,x_k\}\subset V(G)$ where for each $i$, $x_i$ is
simplicial, $|N_G[x_i]|\leq 3$ and $\{N_G[x_i]\mid i=1,\ldots,k\}$ is
a partition of $V(G)$.

\begin{theorem}\label{wellcovered-characterization1}{\rm \cite[Theorem 1.1]{Finbow2}}
Let $G$ be a connected graph that contain neither $4$- nor $5$-cycles and let
$C_7$ and $T_{10}$ be the two graphs in Example~\ref{example2}. Then
$G$ is a well-covered graph if and only if 
$G\in \{C_7,T_{10}\}$ or $G\in\mathcal{F}$.
\end{theorem}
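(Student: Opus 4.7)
The easy direction is a direct verification. For $C_7$ and $T_{10}$ one lists the maximal stable sets and checks that they all have the same cardinality (three for $C_7$, four for $T_{10}$; this is the content of Example~\ref{example2}). If $G\in\mathcal{F}$ with partition $\{N_G[x_i]\}_{i=1}^k$, then because each $x_i$ is simplicial, $G[N_G[x_i]]$ is a clique, so any stable set meets each block in at most one vertex; maximality forces equality in each block, so every maximal stable set has cardinality exactly $k$ and $G$ is well-covered.

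For the nontrivial direction, the plan is induction on $|V(G)|$. Suppose $G$ is connected, well-covered, $C_4$- and $C_5$-free, and $G\notin\{C_7,T_{10}\}$; the goal is to locate a simplicial vertex $x$ with $|N_G[x]|\leq 3$ such that peeling off $N_G[x]$ leaves a graph whose components are again in $\mathcal{F}$. Removal of a closed neighborhood behaves well: by Theorem~\ref{Campbell-theo}, $G_x:=G\setminus N_G[x]$ is well-covered with $\beta_0(G_x)=\beta_0(G)-1$, and being $C_4$- and $C_5$-free is inherited by induced subgraphs. If each connected component of $G_x$ is either smaller and lies in $\mathcal{F}$ by induction (the exceptional components $C_7$ and $T_{10}$ must be ruled out separately, see below), then appending the block $N_G[x]$ to those partitions yields a simplicial partition of $G$ with $|N_G[x_i]|\leq 3$ at every block, placing $G$ in $\mathcal{F}$.

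The main obstacle, and the combinatorial heart of the theorem, is to produce the simplicial vertex $x$ of degree at most $2$ whenever $G\notin\{C_7,T_{10}\}$. Here the $C_4$- and $C_5$-free hypothesis is used repeatedly. The $C_4$-free condition says that any two distinct vertices share at most one common neighbor, so for each $v$ the induced subgraph on $N_G(v)$ has at most one edge at each pair; the $C_5$-free condition then controls how vertices at distance two from $v$ interact with $N_G(v)$. One fixes a vertex $v$ of minimum degree and analyzes the local structure on $N_G[v]\cup N_G(N_G(v))$, using well-coveredness (through Theorem~\ref{Campbell-theo} applied to carefully chosen vertices) to force either the existence of a simplicial vertex $x$ with $\deg_G(x)\leq 2$ in $N_G[v]$, or a rigidly constrained configuration. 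In the latter case one traces the forced adjacencies along a shortest cycle or along a path through vertices of degree two and shows that the configuration closes up to an isomorphic copy of $C_7$ or of $T_{10}$, contradicting the assumption $G\notin\{C_7,T_{10}\}$. This case analysis is the lengthy step of \cite{Finbow2}.

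Finally, once $x$ is produced one must guarantee that no connected component of $G_x$ is $C_7$ or $T_{10}$, so that the inductive hypothesis applies. If some component $H$ of $G_x$ were isomorphic to $C_7$ or $T_{10}$, then since $G$ is connected, some vertex $w\in H$ is adjacent in $G$ to a vertex of $N_G(x)\setminus\{x\}$; one shows this edge, together with the structure of $H$ at $w$ and the simplex $G[N_G[x]]$, produces a forbidden $4$- or $5$-cycle, a contradiction. Thus every component of $G_x$ lies in $\mathcal{F}$ by induction, and gluing the simplicial partitions together with the block $N_G[x]$ completes the proof that $G\in\mathcal{F}$.
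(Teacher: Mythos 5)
The paper does not prove this statement: it is imported verbatim from Finbow, Hartnell and Nowakowski \cite[Theorem~1.1]{Finbow2}, so there is no internal proof to compare against. Your verification of the easy direction is correct and complete: each block $N_G[x_i]$ of a simplicial partition is a clique, so a stable set meets each block at most once, and maximality forces exactly once (a maximal stable set missing $N_G[x_i]$ could absorb $x_i$); hence every maximal stable set has size $k$. The graphs $C_7$ and $T_{10}$ are checked directly.

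For the converse, what you have written is an outline rather than a proof. The entire content of the theorem is the claim that a connected, well-covered, $C_4$- and $C_5$-free graph other than $C_7$ and $T_{10}$ contains a simplicial vertex $x$ of degree at most $2$ whose closed neighborhood can be split off compatibly with the rest of the graph; you describe the shape of the case analysis that would establish this and then explicitly defer it to \cite{Finbow2}, so the decisive step is missing. In addition, the inductive gluing step as you state it has a concrete gap: a vertex $y$ that is simplicial in a component $H$ of $G_x=G\setminus N_G[x]$ need not be simplicial in $G$, and $N_G[y]$ may strictly contain $N_H[y]$ (for instance, $y$ may be adjacent in $G$ to a vertex of $N_G(x)$). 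Consequently the union of the partitions $\{N_H[y_i]\}$ of the components with the block $N_G[x]$ need not be a partition of $V(G)$ into closed neighborhoods of vertices that are simplicial \emph{in} $G$, which is what membership in $\mathcal{F}$ requires; the blocks could even overlap $N_G[x]$. Controlling exactly this interaction is part of what makes the original argument long, and it is not handled by the peeling induction as described.
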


\begin{theorem}\label{lower-bound-reg} 
Let $G$ be a graph. The following hold.
\begin{enumerate}
\item [(a)] {\rm(\cite[Theorem 4.5]{Beyarslan-etal},
\cite{katzman1})} 
$2(n-1)+{\rm im}(G)\leq{\rm
reg}(S/I(G)^n) $ for all $n\geq 1$.
\item[(b)] {\rm(\cite[Theorem 4.7]{Beyarslan-etal},
\cite{JS-very-well-covered})} If $G$ is a forest or
$G$ is very well-covered, then 
$${\rm
reg}(S/I(G)^n)=2(n-1)+{\rm im}(G)\text{ for all } n\geq 1.
$$
\item[(c)] \cite[Theorem 1.3]{Mahmoudi-et-al} If $G$ is very
well-covered, then ${\rm reg}(S/I(G)) = {\rm im}(G)$.
\end{enumerate}
\end{theorem}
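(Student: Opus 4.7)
The plan is to handle the three parts in order, noting that (c) follows from (b) by specializing to $n=1$.

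For part (a), I would use Katzman's induced-matching argument extended to powers. Fix an induced matching $M=\{e_1,\ldots,e_r\}$ with $r={\rm im}(G)$ and consider the induced subgraph $H=G[V(M)]$ on the matched vertices. Because $M$ is induced, $H$ is a disjoint union of $r$ edges and $I(H)$ is a complete intersection of $r$ quadrics. The Koszul-based resolution of $I(H)^n$ yields ${\rm reg}(S/I(H)^n)=2(n-1)+r$ for every $n\ge 1$. It then suffices to establish the monotonicity
$$
{\rm reg}(S/I(H)^n)\le {\rm reg}(S/I(G)^n)
$$
whenever $H$ is an induced subgraph of $G$. This follows by a multigraded Betti number lift: the retraction $S\to K[V(H)]$ sending the variables outside $V(H)$ to zero carries $I(G)^n$ onto $I(H)^n$ (because $H$ is induced), so any nonzero multigraded Tor class for $S/I(H)^n$ in homological degree $i$ and internal degree $j$ pulls back to a nonzero class of the same bidegree $(i,j)$ for $S/I(G)^n$.

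For part (b), the plan is a double induction on $|V(G)|$ and on $n$, exploiting the short exact sequence
$$
0\longrightarrow S/(I^n:x)(-1)\xrightarrow{\ \cdot x\ } S/I^n\longrightarrow S/(I^n,x)\longrightarrow 0
$$
for a carefully chosen $x\in V(G)$, where $I=I(G)$. This yields
$$
{\rm reg}(S/I^n)\le \max\{{\rm reg}(S/(I^n:x))+1,\ {\rm reg}(S/(I^n,x))\}.
$$
For a forest, I would take $x$ to be a leaf, in which case $(I^n:x)$ reduces to a power of a forest edge ideal on a smaller vertex set and $(I^n,x)$ deletes $x$ from the graph. For a very well-covered graph, I would use Theorem~\ref{konig} to select $x$ as an endpoint of an edge of a perfect matching with property \textbf{(P)}, whose mate has a simplicial-like local structure; the smaller graphs $G\setminus x$ and $G\setminus N_G[x]$ then inherit enough of the perfect-matching structure to allow the induction. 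Tracking that ${\rm im}$ of the smaller graph drops by exactly one in the key case recovers the sharp value $2(n-1)+{\rm im}(G)$; combining with the lower bound from (a) closes the proof.

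The main obstacle is the very well-covered case of (b). The subtlety is that $G\setminus N_G[x]$ need not itself be very well-covered after deletion, so the inductive hypothesis must be robust enough to cover the intermediate graphs that appear; similarly, one must argue that $(I^n:x)$ decomposes into colon ideals whose regularities are controlled by induced matchings of still smaller graphs. Property \textbf{(P)} from Favaron's theorem is precisely the combinatorial input that makes this bookkeeping work, and explains why the technique fails for merely well-covered graphs such as $C_5$, where ${\rm im}(C_5)=1<2={\rm v}(I(C_5))$.
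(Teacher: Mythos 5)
The paper does not prove Theorem~\ref{lower-bound-reg}: it is imported verbatim from the literature (Katzman, Beyarslan--H\`a--Trung, Jayanthan--Selvaraja, Mahmoudi et al.) and used as a black box, so there is no in-paper argument to compare against; your proposal has to stand on its own. Your part (a) is essentially the standard proof and is sound: for an induced subgraph $H$ the retraction $S\to K[V(H)]$ maps $I(G)^n$ onto $I(H)^n$ while the inclusion goes the other way, so $K[V(H)]/I(H)^n$ is an algebra retract of $S/I(G)^n$ and each multigraded Betti number of the former is bounded by the corresponding one of the latter; combined with ${\rm reg}(S/I(H)^n)=2(n-1)+r$ for a complete intersection of $r$ quadrics, this gives (a). Part (c) does follow from (b) at $n=1$, although logically (c) is an ingredient of the published proofs of (b), so you should not present it as a corollary if you intend (b) to rest on it.

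The genuine gap is in part (b). The exact sequence $0\to (S/(I^n:x))(-1)\to S/I^n\to S/(I^n,x)\to 0$ with $x$ a \emph{vertex} does not set up a workable induction for $n\ge 2$: the ideal $(I^n:x)$ is in general not the $n$-th power of an edge ideal of a smaller graph, so neither the inductive hypothesis on $|V(G)|$ nor the one on $n$ applies to it, and your sketch never identifies what ${\rm reg}(S/(I^n:x))$ should be. The proofs in the literature instead induct on $n$ through Banerjee's colon ideals $(I^{n+1}:e_{i_1}\cdots e_{i_n})$ taken with respect to products of \emph{edges}; Banerjee's theorem identifies each such ideal, after polarization, with the edge ideal of an ``even-connected'' graph $G'$ on $V(G)$, reducing the claim to ${\rm reg}(S/I(G'))\le{\rm im}(G)$. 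For forests and for very well-covered graphs one then shows that every even-connected graph $G'$ arising this way satisfies ${\rm im}(G')\le{\rm im}(G)$ and ${\rm reg}(S/I(G'))={\rm im}(G')$ (the latter is exactly part (c)); Favaron's property {\bf(P)} enters at that combinatorial step, not in selecting a vertex to delete. Without this machinery, or some other control of $(I^n:x)$, your induction does not close, and the assertion that ${\rm im}$ ``drops by exactly one in the key case'' is unsupported.
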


\section{The \text{v}-number of a graded ideal}\label{vnumber-section}
Let $S=K[t_1,\ldots,t_s]=\bigoplus_{d=0}^{\infty} S_d$ be a polynomial ring over
a field $K$ with the standard grading and let $I$ be a graded ideal of
$S$. In this section we show a formula for the v-number of $I$ that
can be used to compute this number using \textit{Macaulay}$2$
\cite{mac2}. 
To avoid repetitions, we continue to employ 
the notations and
definitions used in Sections~\ref{intro-section} and
\ref{prelim-section}.

\begin{lemma}\label{mingens-lemma}
Let $I\subset S$ be a graded ideal. If $(I\colon f)=\mathfrak{p}$ for
some prime ideal $\mathfrak{p}$ and some $f\in S_d$, $d\geq 0$, then
$I\subsetneq(I\colon\mathfrak{p})$ and there is a
minimal homogeneous generator $\overline{g}:=g+I$ of
$(I\colon\mathfrak{p})/I$ 
such that $\deg(f)\geq \deg(g)$ and $(I\colon g)=\mathfrak{p}$.
\end{lemma}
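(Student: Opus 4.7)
The plan is to first verify the strict inclusion $I\subsetneq(I\colon\mathfrak{p})$ and then locate the desired minimal generator inside a homogeneous decomposition of $f$ modulo $I$. The strict inclusion will be immediate: since $\mathfrak{p}$ is a proper prime ideal, $(I\colon f)=\mathfrak{p}$ forces $f\notin I$, while $\mathfrak{p}\cdot f\subset I$ gives $f\in(I\colon\mathfrak{p})$; hence $f\in(I\colon\mathfrak{p})\setminus I$.

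For the main assertion, I will fix a homogeneous minimal generating set $\{\overline{g}_1,\ldots,\overline{g}_r\}$ of the graded $S$-module $(I\colon\mathfrak{p})/I$. Because $f$ is homogeneous of degree $d$, I can write
\[
f\equiv\sum_{i=1}^{r}h_i g_i\pmod{I}
\]
with each $h_i$ homogeneous of degree $d-\deg(g_i)$; in particular $h_i=0$ whenever $\deg(g_i)>d$. Since every $g_i$ lies in $(I\colon\mathfrak{p})$, one also has $\mathfrak{p}\subset(I\colon g_i)$ for each $i$.

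The heart of the argument will be to show that at least one index $i$ with $h_i\neq 0$ satisfies $(I\colon g_i)=\mathfrak{p}$. I plan to prove this by contradiction: if instead every such index had $(I\colon g_i)\supsetneq\mathfrak{p}$, then for each $i$ with $h_i\neq 0$ I could pick some $a_i\in(I\colon g_i)\setminus\mathfrak{p}$. Setting $a:=\prod_{h_i\neq 0}a_i$, the primeness of $\mathfrak{p}$ would give $a\notin\mathfrak{p}$, while
\[
af\equiv\sum_{h_i\neq 0}h_i(ag_i)\pmod{I}
\]
lies in $I$ because $a_i g_i\in I$ forces $a g_i\in I$ for every such $i$. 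Hence $a\in(I\colon f)=\mathfrak{p}$, a contradiction. Taking $g:=g_i$ for an index supplied by the claim then yields the conclusion: $h_i\neq 0$ forces $\deg(g)\leq d=\deg(f)$, and $(I\colon g)=\mathfrak{p}$ by construction.

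The main subtlety I anticipate is restricting the product defining $a$ to the indices with $h_i\neq 0$; this bookkeeping is essential because primeness of $\mathfrak{p}$ only propagates along finite products, and the reduction of $af$ modulo $I$ has to reuse exactly the same index set as the decomposition of $f$. Once this point is handled, keeping the expansion of $f$ homogeneous automatically delivers the degree inequality $\deg(g)\leq\deg(f)$ with no extra work.
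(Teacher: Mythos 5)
Your proof is correct. The setup is the same as the paper's: both arguments fix a homogeneous minimal generating set $\{\overline{g}_1,\ldots,\overline{g}_r\}$ of $(I\colon\mathfrak{p})/I$ and expand $f$ homogeneously as $f\equiv\sum_i h_ig_i\pmod{I}$, so that the degree inequality comes for free from $h_i\neq 0$. Where you diverge is in how the good index is located. The paper shows $\mathfrak{p}\subset\bigcap_i(I\colon g_ih_i)\subset(I\colon f)=\mathfrak{p}$, so the prime $\mathfrak{p}$ is a finite intersection of ideals, and then invokes the standard fact that a prime ideal equal to a finite intersection must equal one of the intersectands to extract an $i$ with $(I\colon g_ih_i)=\mathfrak{p}$, whence $(I\colon g_i)=\mathfrak{p}$ by sandwiching. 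You instead run the underlying product argument directly on elements: assuming $(I\colon g_i)\supsetneq\mathfrak{p}$ for every contributing index, you pick $a_i\in(I\colon g_i)\setminus\mathfrak{p}$, multiply them, and contradict $(I\colon f)=\mathfrak{p}$. The two are logically the same mechanism (your product trick is exactly how the cited intersection lemma is proved), but your version is self-contained, avoids introducing the auxiliary ideals $(I\colon g_ih_i)$, and correctly restricts the product to the indices with $h_i\neq 0$ — which, as you note, is the one bookkeeping point that has to be handled, and you handle it. One tiny remark: it is worth saying explicitly that the index set $\{i\mid h_i\neq 0\}$ is nonempty because $f\notin I$, though even the degenerate case (empty product $a=1$) would still yield the contradiction $1\in\mathfrak{p}$.
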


\begin{proof} The strict inclusion $I\subsetneq(I\colon\mathfrak{p})$
is clear because $f\in(I\colon\mathfrak{p})\setminus I$. Let
$\mathcal{G}=\{\overline{g}_1,\ldots,\overline{g}_r\}$ be a minimal 
generating set of $(I\colon\mathfrak{p})/I$ such that
$g_i$ is a homogeneous polynomial for all $i$. As
$(I\colon f)=\mathfrak{p}$, one has $\overline{f}\neq\overline{0}$ and
$f\in(I\colon\mathfrak{p})$. Then, we can choose homogeneous
polynomials $h_1,\ldots,h_r$ in $S$, $p$ in $I$, such that 
$f=\sum_{i=1}^rh_ig_i+p$ and $d=\deg(h_ig_i)$ for all $i$ with
$h_i\neq 0$. One
has the inclusion $\bigcap_{i=1}^r(I\colon g_ih_i)\subset(I\colon
f)$. Indeed, take $h$ in $\bigcap_{i=1}^r(I\colon g_ih_i)$, then
$hh_ig_i\in I$ for all $i$ and $hf=\sum_{i=1}^rhh_ig_i+hp$ is in $I$,
thus $h$ is in $(I\colon f)$.
Therefore, using that all $g_i$'s are in
$(I\colon\mathfrak{p})$, one has the inclusions
$$
\mathfrak{p}\ \mathlarger{\subset}\bigcap_{i=1}^r(I\colon g_i)\
\mathlarger{\subset}\bigcap_{i=1}^r(I\colon g_ih_i)\
\mathlarger{\subset}(I\colon f)=\mathfrak{p},
$$
and consequently $\mathfrak{p}=\bigcap_{i=1}^r(I\colon g_ih_i)$. Hence,
by \cite[p.~74, 2.1.48]{monalg-rev}, we get $(I\colon
g_ih_i)=\mathfrak{p}$ for some $1\leq i\leq r$. As $g_i$ is in
$(I\colon\mathfrak{p})$, we obtain 
$$
\mathfrak{p}\subset(I\colon g_i)\subset(I\colon g_ih_i)=\mathfrak{p}.
$$
\quad Hence $\mathfrak{p}=(I\colon g_i)$ and
$d=\deg(f)=\deg(g_ih_i)\geq\deg(g_i)$.
\end{proof}

\begin{theorem}\label{vnumber-general} 
Let $I\subset S$ be a graded ideal and let $\mathfrak{p}\in{\rm
Ass}(I)$. The following hold.

\begin{enumerate}
\item[(a)]
If $\mathcal{G}=\{\overline{g}_1,\ldots,\overline{g}_r\}$ is
a homogeneous minimal generating set of $(I\colon\mathfrak{p})/I$,
then  
$$
{\rm v}_{\mathfrak{p}}(I)=
\min\{\deg(g_i)\mid 1\leq i\leq r\mbox{ and }(I\colon
g_i)=\mathfrak{p}\}. 
$$
\item[(b)] ${\rm v}(I)=\min\{{\rm v}_{\mathfrak{q}}(I)\mid
\mathfrak{q}\in{\rm Ass}(I)\}$. 
\item[(c)] ${\rm
v}_{\mathfrak{p}}(I)\geq\alpha((I\colon\mathfrak{p})/I)$ with equality
if $\mathfrak{p}\in{\rm Max}(I)$.
\item[(d)] If $I$ has no embedded primes, then $
{\rm v}(I)=\min\{\alpha\left((I\colon\mathfrak{q})/{I}\right)\vert\,
\mathfrak{q}\in{\rm Ass}(I)\}.
$
\end{enumerate}
\end{theorem}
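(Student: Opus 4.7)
The plan is to prove (a) first using Lemma~\ref{mingens-lemma} as the main tool, deduce (b) by unwinding definitions, establish (c) via a short module-theoretic argument, and finally derive (d) as a two-line consequence of (b) and (c).

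For part (a), I first verify that the right-hand set is nonempty. Because $\mathfrak{p}\in\mathrm{Ass}(I)$, some $f\in S_d$ satisfies $(I\colon f)=\mathfrak{p}$, and Lemma~\ref{mingens-lemma} then produces a minimal homogeneous generator $\overline{g}_i$ of $(I\colon\mathfrak{p})/I$ with $(I\colon g_i)=\mathfrak{p}$ and $\deg(g_i)\leq d$. The inequality $\mathrm{v}_{\mathfrak{p}}(I)\leq\min\{\deg(g_i)\mid(I\colon g_i)=\mathfrak{p}\}$ is immediate from the definition of $\mathrm{v}_{\mathfrak{p}}(I)$. For the reverse inequality, I pick $f\in S_d$ realizing $d=\mathrm{v}_{\mathfrak{p}}(I)$ and apply Lemma~\ref{mingens-lemma} once more. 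Part (b) is then purely formal: the definition says $\mathrm{v}(I)=\min\{d\geq 0\mid\exists f\in S_d,\ \exists\mathfrak{p}\in\mathrm{Ass}(I),\ (I\colon f)=\mathfrak{p}\}$, and separating the outer minimum over $\mathfrak{p}$ rewrites this as $\min_{\mathfrak{p}\in\mathrm{Ass}(I)}\min\{d\geq 0\mid\exists f\in S_d,\ (I\colon f)=\mathfrak{p}\}=\min_{\mathfrak{p}}\mathrm{v}_{\mathfrak{p}}(I)$.

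For part (c), the inequality $\mathrm{v}_{\mathfrak{p}}(I)\geq\alpha((I\colon\mathfrak{p})/I)$ is immediate: if $(I\colon f)=\mathfrak{p}$ with $f\in S_d$, then $f\mathfrak{p}\subset I$ and $f\notin I$, so $\overline{f}$ is a nonzero homogeneous element of $(I\colon\mathfrak{p})/I$ of degree $d$. The substantive step, and the one I expect to be the main technical obstacle, is the equality when $\mathfrak{p}\in\mathrm{Max}(I)$. My plan is to pick a minimal generator $g_i$ of $(I\colon\mathfrak{p})/I$ of minimum degree, so $\deg(g_i)=\alpha((I\colon\mathfrak{p})/I)$, and show directly that $(I\colon g_i)=\mathfrak{p}$; then part (a) yields $\mathrm{v}_{\mathfrak{p}}(I)\leq\deg(g_i)$. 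The inclusion $\mathfrak{p}\subset(I\colon g_i)$ is immediate. For the reverse, I use that the map $S\to S/I$ sending $h\mapsto\overline{hg_i}$ has kernel $(I\colon g_i)$, so $S/(I\colon g_i)\cong(g_iS+I)/I$ realizes $S/(I\colon g_i)$ as a submodule of $S/I$, whence $\mathrm{Ass}(S/(I\colon g_i))\subset\mathrm{Ass}(I)$. Every $\mathfrak{q}\in\mathrm{Ass}(S/(I\colon g_i))$ contains $(I\colon g_i)$ and hence contains $\mathfrak{p}$; by maximality of $\mathfrak{p}$ in $\mathrm{Ass}(I)$, $\mathfrak{q}=\mathfrak{p}$. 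Thus $\mathfrak{p}$ is the unique associated prime of $S/(I\colon g_i)$, which forces $(I\colon g_i)\subset\mathfrak{p}$.

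Part (d) then falls out in one line: the hypothesis that $I$ has no embedded primes gives $\mathrm{Ass}(I)=\mathrm{Max}(I)$, so by (c) each local $\mathrm{v}$-number equals $\alpha((I\colon\mathfrak{q})/I)$, and substituting into the formula of (b) yields the claimed expression for $\mathrm{v}(I)$.
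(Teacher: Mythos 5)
Your proposal is correct and follows essentially the same route as the paper: part (a) by applying Lemma~\ref{mingens-lemma} twice, (b) by unwinding the definition, (c) via the containment $\mathrm{Ass}(S/(I\colon f))\subset\mathrm{Ass}(S/I)$ together with maximality of $\mathfrak{p}$, and (d) by combining (b) and (c). The only cosmetic difference is that in (c) you run the associated-prime argument on a minimum-degree minimal generator, while the paper runs it on an arbitrary homogeneous element of $(I\colon\mathfrak{p})\setminus I$; both yield the same conclusion.
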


\begin{proof} (a): Take any homogeneous polynomial $f$ in $S$ such that $(I\colon
f)=\mathfrak{p}$. Then, by Lemma~\ref{mingens-lemma}, there is
$g_j\in\mathcal{G}$ such that $\deg(f)\geq \deg(g_j)$ and $(I\colon
g_j)=\mathfrak{p}$. Thus, the set $\{g_i\mid (I\colon
g_i)=\mathfrak{p}\}$ is not empty and the inequality 
$$
{\rm v}_{\mathfrak{p}}(I)\leq \min\{\deg(g_i)\mid 1\leq i\leq r\mbox{
and }(I\colon 
g_i)=\mathfrak{p}\} 
$$
follows by definition of ${\rm v}_{\mathfrak{p}}(I)$. Now, we can pick a homogeneous
polynomial $f$ in $S$ such
that $\deg(f)={\rm v}_\mathfrak{p}(I)$ and $(I\colon
f)=\mathfrak{p}$ . Then, by Lemma~\ref{mingens-lemma}, there is
$g_j\in\mathcal{G}$ such that $\deg(f)\geq \deg(g_j)$ and $(I\colon
g_j)=\mathfrak{p}$. Thus, $\deg(f)=\deg(g_j)$ and the inequality
``$\geq$'' holds. 

(b): This follows at once from the definitions of ${\rm v}(I)$ and
${\rm v}_{\mathfrak{q}}(I)$.

(c): Pick a homogeneous polynomial $g$ in $S$ such that $\deg(g)={\rm
v}_{\mathfrak{p}}(I)$ and $(I\colon g)=\mathfrak{p}$. Then, $g\notin
I$ and $g\mathfrak{p}\subset I$, that is, $g$ is in
$(I\colon\mathfrak{p})\setminus I$. Thus ${\rm
v}_{\mathfrak{p}}(I)\geq\alpha((I\colon\mathfrak{p})/I)$.  Now,
assume that $\mathfrak{p}$ is in ${\rm Max}(I)$. To show
the reverse inequality take any homogeneous polynomial $f$ in
$(I\colon\mathfrak{p})\setminus I$. Then $f\mathfrak{p}\subset I$ and
$\mathfrak{p}\subset(I\colon f)$. Since ${\rm Ass}(I\colon f)$ is
contained in ${\rm Ass}(I)$, there is $\mathfrak{q}\in{\rm Ass}(I)$
such that $\mathfrak{p}\subset (I\colon f)\subset\mathfrak{q}$. Hence,
$\mathfrak{p}=\mathfrak{q}$ and $\mathfrak{p}=(I\colon f)$. Thus
${\rm v}_{\mathfrak{p}}(I)\leq\deg(f)$ and 
${\rm v}_{\mathfrak{p}}(I)\leq\alpha((I\colon\mathfrak{p})/I)$. 

(d): This follows immediately from (b) and (c).
\end{proof}

We give a direct proof of the next result that in particular relates the v-number of
a Cohen--Macaulay monomial ideal $I\subset S$ with that of $(I,h)$,
where $h\in S_1$ and $(I\colon h)=I$.

\begin{corollary}\cite[Proposition~4.9]{v-number}
Let $I\subset S$ be a Cohen--Macaulay non-prime graded ideal whose associated primes are
generated by linear forms and let $h\in S_1$ be a regular element on
$S/I$. Then ${\rm v}(I,h)\leq{\rm v}(I)$.
\end{corollary}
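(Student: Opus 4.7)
My plan is to apply Theorem~\ref{vnumber-general}(d) to both $I$ and $(I,h)$ and to exhibit a low-degree class in the associated colon quotient for $(I,h)$. Since $h$ is regular on the Cohen--Macaulay ring $S/I$, $S/(I,h)$ is also Cohen--Macaulay, so $(I,h)$ has no embedded primes and part~(d) of Theorem~\ref{vnumber-general} is available for $(I,h)$. For every $\mathfrak{p}\in{\rm Ass}(I)$ one has $h\notin\mathfrak{p}$, so $\mathfrak{p}':=(\mathfrak{p},h)$ is still generated by linear forms and is prime. Moreover, all associated primes of $I$ share a common height (unmixedness from Cohen--Macaulayness), so the primes $\mathfrak{p}_0+(h)$ with $\mathfrak{p}_0\in{\rm Ass}(I)$ share a common height and are therefore pairwise incomparable; each of them, and $\mathfrak{p}'$ in particular, is thus a minimal, and hence associated, prime of $(I,h)$.

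Next I would fix $\mathfrak{p}\in{\rm Ass}(I)$ with ${\rm v}_{\mathfrak{p}}(I)={\rm v}(I)$, afforded by part~(b), and a homogeneous $f\in S$ with $(I\colon f)=\mathfrak{p}$ and $\deg(f)={\rm v}_{\mathfrak{p}}(I)$, afforded by part~(a). From $f\mathfrak{p}\subset I\subset(I,h)$ and $fh\in(I,h)$ one gets $f\mathfrak{p}'\subset(I,h)$, so $f$ represents a class of degree ${\rm v}(I)$ in $((I,h)\colon\mathfrak{p}')/(I,h)$.

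The heart of the argument is to show that this class is nonzero, i.e.\ that $f\notin(I,h)$. Assume for contradiction $f=i+bh$ with $i\in I$ and $b\in S$ homogeneous of degree $\deg(f)-1$. Reducing modulo $I$ gives $\overline{f}=\overline{b}\,\overline{h}$ in $S/I$, and since $\overline{h}$ is a non-zero-divisor on $S/I$ the annihilators of $\overline{b}$ and $\overline{b}\,\overline{h}$ in $S/I$ coincide. Hence $(I\colon b)=(I\colon f)=\mathfrak{p}$, so $\deg(b)\geq{\rm v}_{\mathfrak{p}}(I)=\deg(f)$, contradicting $\deg(b)=\deg(f)-1$. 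Note that the argument requires $\deg(f)\geq 1$, which is exactly where the non-primeness of $I$ enters: if $I$ were prime then ${\rm v}(I)=0$ and $b$ would need negative degree, so the reduction would collapse.

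Combining the pieces, $\alpha\bigl(((I,h)\colon\mathfrak{p}')/(I,h)\bigr)\leq\deg(f)={\rm v}(I)$, and Theorem~\ref{vnumber-general}(d) applied to $(I,h)$ yields ${\rm v}(I,h)\leq{\rm v}(I)$. I expect the main obstacle to be the step $f\notin(I,h)$, where one must exploit regularity of $h$ to transfer the annihilator of $\overline{f}$ to that of $\overline{b}$; the remaining ingredients (that $(\mathfrak{p},h)$ is an associated prime of $(I,h)$ and that Cohen--Macaulayness transfers from $I$ to $(I,h)$) are routine.
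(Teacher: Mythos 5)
Your proof is correct, and it follows the same overall strategy as the paper (pick a homogeneous $f$ of degree ${\rm v}(I)$ with $(I\colon f)=\mathfrak{p}$, show $f\notin(I,h)$ using regularity of $h$, and make $(\mathfrak{p},h)$ the witnessing prime), but the two halves are executed differently. The paper proves the exact equality $((I,h)\colon f)=(\mathfrak{p},h)$ by a height argument on the associated primes of $((I,h)\colon f)$, so $f$ witnesses ${\rm v}(I,h)\leq\deg(f)$ straight from the definition, with no need to check beforehand that $(\mathfrak{p},h)\in{\rm Ass}(I,h)$ or to invoke Theorem~\ref{vnumber-general}(d) for $(I,h)$. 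You instead only verify that $\overline{f}$ is a nonzero class in $((I,h)\colon(\mathfrak{p},h))/(I,h)$ and let part~(d) finish; this requires your separate identification of $(\mathfrak{p},h)$ as an associated prime, which is correct but stated a bit telegraphically --- to pass from ``the primes $\mathfrak{p}_0+(h)$ are pairwise incomparable'' to ``each is a minimal prime of $(I,h)$'' you should add that every prime containing $(I,h)$ contains some $\mathfrak{p}_0+(h)$, since the $\mathfrak{p}_0$ exhaust the minimal primes of $I$. For the key step $f\notin(I,h)$ the proofs genuinely diverge: the paper takes $\overline{f}$ to be a minimal homogeneous generator of $(I\colon\mathfrak{p})/I$ (via Lemma~\ref{mingens-lemma}) and contradicts minimal generation from $\overline{f}=\overline{h}\,\overline{f_2}$, whereas you derive $(I\colon b)=(I\colon bh)=\mathfrak{p}$ from regularity of $h$ and contradict the minimality of ${\rm v}_{\mathfrak{p}}(I)$ by a degree count. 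Your variant is slightly more elementary, avoids the minimal-generation apparatus, and makes explicit where non-primeness of $I$ enters; the paper's variant buys the stronger conclusion that $(\mathfrak{p},h)$ is literally realized as $((I,h)\colon f)$.
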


\begin{proof} By Theorem~\ref{vnumber-general}, 
there are $\mathfrak{p}\in{\rm Ass}(I)$ and 
$f\in(I\colon\mathfrak{p})\setminus I$ such that $\overline{f}=f+I$ is
a minimal generator of $M_\mathfrak{p}=(I\colon\mathfrak{p})/I$ and
$\deg(f)={\rm v}(I)$. The
associated primes of $(I\colon f)$ are contained in ${\rm Ass}(I)$,
thus there is $\mathfrak{q}\in{\rm Ass}(I)$
such that $\mathfrak{p}\subset(I \colon f)\subset\mathfrak{q}$. Hence,
$\mathfrak{p}=\mathfrak{q}$ because $I$ has no embedded associated
primes, and one has the equality $(I\colon f)=\mathfrak{p}$. We claim
that $f$ is not in $(I,h)$. 
By contradiction assume that $f\in(I,h)$. Then we can write
$f=f_1+hf_2$, with $f_i$ a homogeneous polynomial for $i=1,2$, $f_1\in I$,
$f_2\in S$. Hence, one has 
$$
\mathfrak{p}=(I\colon f)=(I\colon hf_2)=(I\colon f_2).
$$
\quad Therefore $f_2\in(I\colon\mathfrak{p})\setminus I$ and
$\overline{f}=\overline{h}\,\overline{f_2}$, a contradiction because
$\overline{f}$ is a minimal generator of $M_\mathfrak{p}$. This proves that $f\notin(I,h)$. Next we show
the equality $(\mathfrak{p},h)=((I,h)\colon f)$. The inclusion
``$\subset$'' is clear because $(I\colon f)=\mathfrak{p}$. Take an
associated prime $\mathfrak{p}'$ of $((I,h)\colon f)$. The height of
$\mathfrak{p}'$ is equal to ${\rm ht}(I)+1$ because $(I,h)$ is
Cohen--Macaulay and the associated primes of $((I,h)\colon f)$ are
contained in ${\rm Ass}(I,h)$. Then 
$$
\mathfrak{p}=(I\colon f)\subset ((I,h)\colon f)\subset\mathfrak{p}',
$$
and consequently $(\mathfrak{p},h)\subset ((I,h)\colon
f)\subset\mathfrak{p}'$. Now, $(\mathfrak{p},h)$ is prime because
$\mathfrak{p}$ is generated by linear forms, and ${\rm
ht}(\mathfrak{p},h)={\rm ht}(\mathfrak{p})+1={\rm ht}(I)+1$ because
$I$ is Cohen--Macaulay and $h$ is a regular element on $S/I$. Thus, 
$(\mathfrak{p},h)=\mathfrak{p}'$, $(\mathfrak{p},h)=((I,h)\colon
f)$, and ${\rm v}(I,h)\leq {\rm v}(I)$.
\end{proof}

\begin{proposition}\label{reg-dim-pol} Let $I\subset S$ be a monomial ideal minimally generated
by $G(I)$ and for each $t_i$ that occurs
in a monomial of $G(I)$ let $\gamma_i:=\max\{\deg_{t_i}(g)\vert\, g\in G(I)\}$.
Then 
$$
{\rm reg}(S/I)\leq\dim(S/I)+\textstyle\sum_{i}(\gamma_i-1).
$$
\end{proposition}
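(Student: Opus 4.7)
The plan is to reduce to the squarefree case via Fr\"oberg's polarization technique \cite{Fro1} and then bound the regularity of the resulting Stanley--Reisner ring by its Krull dimension via Hochster's formula. For each variable $t_i$ occurring in $G(I)$, I would introduce new indeterminates $t_{i,2},\ldots,t_{i,\gamma_i}$, work in the enlarged polynomial ring $T:=S[t_{i,k}\,\vert\, 2\leq k\leq\gamma_i]$, and replace each minimal generator $g=\prod_i t_i^{a_i}$ of $I$ by $g^{\mathrm{pol}}:=\prod_i t_{i,1}t_{i,2}\cdots t_{i,a_i}$ (with $t_{i,1}:=t_i$). This produces a squarefree monomial ideal $I^{\mathrm{pol}}\subset T$.

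The key property of the polarization is that the $c:=\sum_i(\gamma_i-1)$ linear forms $\theta_{i,k}:=t_{i,k}-t_{i,1}$ form a regular sequence on $T/I^{\mathrm{pol}}$ with $T/(I^{\mathrm{pol}},\{\theta_{i,k}\})\cong S/I$. Since going modulo a regular sequence of linear forms preserves graded Betti numbers, one has $\mathrm{reg}(T/I^{\mathrm{pol}})=\mathrm{reg}(S/I)$, and the dimension drops by exactly $c$, yielding $\dim(T/I^{\mathrm{pol}})=\dim(S/I)+\sum_i(\gamma_i-1)$.

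Since $I^{\mathrm{pol}}$ is squarefree, we have $I^{\mathrm{pol}}=I_\Delta$ for a simplicial complex $\Delta$ on $V(T)$, and Hochster's formula gives
$$\beta_{i,j}^T(T/I^{\mathrm{pol}})=\sum_{W\subset V(T),\,|W|=j}\dim_K\tilde{H}_{j-i-1}(\Delta|_W;K).$$
A summand can be nonzero only when $j-i-1\leq\dim(\Delta|_W)\leq\dim\Delta=\dim(T/I^{\mathrm{pol}})-1$, so $\beta_{i,j}^T(T/I^{\mathrm{pol}})\neq 0$ forces $j-i\leq\dim(T/I^{\mathrm{pol}})$. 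Taking the maximum yields $\mathrm{reg}(T/I^{\mathrm{pol}})\leq\dim(T/I^{\mathrm{pol}})$, and chaining this with the equalities from the previous step delivers the desired inequality.

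The argument is essentially a packaging of two standard tools, so I do not anticipate serious technical obstacles. The only bookkeeping point is to verify that the number of new variables introduced by the polarization is precisely $\sum_i(\gamma_i-1)$, which is immediate once one fixes that $\gamma_i$ is the maximum exponent of $t_i$ appearing in $G(I)$.
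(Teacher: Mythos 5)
Your proposal is correct and follows essentially the same route as the paper: polarize $I$, note that the number of new variables is $\sum_i(\gamma_i-1)$, and apply the bound $\mathrm{reg}\leq\dim$ for squarefree monomial ideals. The only difference is that you derive the two ingredients directly (the regular sequence $t_{i,k}-t_{i,1}$ for the polarization, and Hochster's formula for the squarefree bound) where the paper simply cites them.
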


\begin{proof} To show this inequality we use the polarization
technique due to Fr\"oberg 
(see \cite{depth-monomial} and \cite[p.~203]{monalg-rev}). 
To polarize $I$ we use the set of new variables
$$
T_I=\textstyle\bigcup_{i=1}^n\{t_{i,2},\ldots,
t_{i,\gamma_i}\},
$$
where $\{t_{i,2},\ldots,t_{i,\gamma_i}\}$ is empty if $\gamma_i=1$.
Note that $|T_I|=\sum_i(\gamma_i-1)$. We identify the variable $t_i$ with $t_{i,1}$ for all
$i$. A power $t_i^{c_i}$ of a variable $t_i$, $1\leq
c_i\leq\gamma_i$, polarizes 
to $(t_i^{c_i})^{\rm pol}=t_i$ if $\gamma_i=1$, to
$(t_i^{c_i})^{\rm pol}=t_{i,2}\cdots t_{i,c_i+1}$ if $c_i<\gamma_i$, and
to $(t_i^{c_i})^{\rm pol}=t_{i,2}\cdots t_{i,\gamma_i}t_i$ if $c_i=\gamma_i$.
Setting $G(I)=\{g_1,\ldots,g_r\}$, the polarization $I^{\rm pol}$ of $I$ is the ideal
of $S[T_I]$ generated by $g_1^{\rm pol},\ldots,g_r^{\rm pol}$.
According to \cite[Corollary 1.6.3]{Herzog-Hibi-book} one has 
$${\rm reg}(S/I)={\rm reg}(S[T_I]/I^{\rm pol})\mbox{ and }{\rm
ht}(I)={\rm ht}(I^{\rm pol}).$$
\quad As $I^{\rm pol}$ is squarefree, by \cite[Proposition 3.2]{v-number},
one has ${\rm reg}(S[T_I]/I^{\rm pol})\leq\dim(S[T_I]/I^{\rm pol})$.
Hence, we obtain
$$
{\rm reg}(S/I)={\rm reg}(S[T_I]/I^{\rm pol})\leq\dim(S[T_I]/I^{\rm
pol})=\dim(S[T_I])-{\rm
ht}(I).
$$
\quad To complete the proof notice that $\dim(S[T_I])-{\rm
ht}(I)=\dim(S/I)+|T_I|$.
\end{proof}

A result of Beintema \cite{beintema} shows that a zero-dimensional
monomial ideal is Gorenstein if and only if it is a complete
intersection. The next result classifies the complete intersection
property using the regularity.

\begin{proposition} Let $I$ be a monomial ideal of $S$ of dimension
zero minimally generated by
$G(I)=\{t_1^{d_1},\ldots,t_s^{d_s},t^{d_{s+1}},\ldots,t^{d_m}\}$, where
$d_i\geq 1$ for $i=1,\ldots,s$ and $d_i\in\mathbb{N}^s\setminus\{0\}$
for $i>s$. Then ${\rm reg}(S/I)\leq\sum_{i=1}^s(d_i-1)$, with 
equality if and only if $I$ is a complete intersection.
\end{proposition}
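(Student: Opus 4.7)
The plan is to derive the inequality as a direct application of Proposition~\ref{reg-dim-pol}, and then analyze the equality case through the Artinian socle of $S/I$.

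First I would observe that since $\dim(S/I)=0$ and $t_i^{d_i}\in G(I)$ for $i=1,\ldots,s$, minimality of the generators forces $\gamma_i=d_i$: certainly $\gamma_i\geq d_i$, and if some generator $t^{d_j}$ with $j>s$ had $\deg_{t_i}(t^{d_j})>d_i$, then $t_i^{d_i}$ would divide it, contradicting minimality. Since no variable outside $\{t_1,\ldots,t_s\}$ appears, plugging into Proposition~\ref{reg-dim-pol} yields
$$
{\rm reg}(S/I)\leq 0+\textstyle\sum_{i=1}^s(\gamma_i-1)=\sum_{i=1}^s(d_i-1).
$$

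For the equality statement, I would use the standard fact that for an Artinian graded quotient $S/I$ one has ${\rm reg}(S/I)=\max\{d : (S/I)_d\neq 0\}$. Let $D:=\sum_{i=1}^s(d_i-1)$ and let $m_0:=t_1^{d_1-1}\cdots t_s^{d_s-1}$, a monomial of degree $D$. For the complete intersection $I_0:=(t_1^{d_1},\ldots,t_s^{d_s})$ it is immediate that $m_0\notin I_0$ (so $(S/I_0)_D\neq 0$) and that every monomial of degree $>D$ has some exponent $\geq d_i$ and hence lies in $I_0$; therefore ${\rm reg}(S/I_0)=D$, which gives the ``if'' direction.

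For the ``only if'' direction, suppose $I$ is not a complete intersection, so $m>s$ and there exists a minimal generator $t^{d_j}=t_1^{a_1}\cdots t_s^{a_s}$ with $j>s$. Since $t_i^{d_i}\nmid t^{d_j}$ for each $i$, we have $a_i\leq d_i-1$ for all $i$, so $t^{d_j}\mid m_0$ and therefore $m_0\in I$. The key combinatorial observation is that among monomials of degree exactly $D$, the identity $\sum b_i=\sum(d_i-1)$ with $b_i\leq d_i-1$ forces $b_i=d_i-1$ for all $i$; hence $m_0$ is the unique degree-$D$ monomial not already in $I_0$, and every other degree-$D$ monomial lies in $I_0\subset I$. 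Combining these facts shows $(S/I)_D=0$, so ${\rm reg}(S/I)<D$, proving the contrapositive.

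The only subtle point is the last combinatorial lemma pinning down $m_0$ as the unique ``escape'' in degree $D$; once that is in hand, the argument reduces to comparing $I$ with $I_0$ through the single socle element of the complete intersection. No extra machinery beyond Proposition~\ref{reg-dim-pol} and the Artinian socle-degree characterization of regularity should be required.
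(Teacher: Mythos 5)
Your proposal is correct and follows essentially the same route as the paper: the inequality comes from Proposition~\ref{reg-dim-pol} with $\dim(S/I)=0$, and the equality case rests on the Artinian characterization of regularity together with the observation that any degree-$\sum(d_i-1)$ monomial outside $I$ must be $t_1^{d_1-1}\cdots t_s^{d_s-1}$, which is divisible by any extra minimal generator. The only cosmetic differences are that you prove the complete-intersection case directly via the socle monomial where the paper cites Chardin--Moreno-Soc\'{\i}as, and you phrase the converse as a contrapositive rather than a contradiction.
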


\begin{proof} The inequality ${\rm reg}(S/I)\leq\sum_{i=1}^s(d_i-1)$
follows directly from Proposition~\ref{reg-dim-pol} because $\dim(S/I)=0$. If
$I$ is a complete intersection, then $I=(t_1^{d_1},\ldots,t_s^{d_s})$
and, by \cite[Lemma~3.5]{Chardin}, we get ${\rm
reg}(S/I)=\sum_{i=1}^s(d_i-1)$. Conversely assume that ${\rm
reg}(S/I)$ is equal to $\sum_{i=1}^s(d_i-1)$. We proceed by
contradiction assuming that $m>s$. Then the exponents of the monomial
$t^{d_m}=t_1^{c_1}\cdots t_s^{c_s}$ satisfy $c_i\leq d_i-1$ for
$i=1,\ldots,s$ because $t^{d_m}\in G(I)$. The regularity of $S/I$ is
the largest integer $d\geq 0$ such that $(S/I)_d\neq(0)$
\cite[Proposition~4.14]{eisenbud-syzygies}. Pick a monomial
$t^a=t_1^{a_1}\cdots t_s^{a_s}$ such that $t^a\in S_d\setminus I$ and
$d=\sum_{i=1}^s(d_i-1)$. Then, $a_i\leq d_i-1$ for $i=1,\ldots,s$
because $t^a$ is not in $I$, and consequently $a_i=d_i-1$ for
$i=1,\ldots,s$. Hence, $t^a=t^\delta t^{d_m}$ for some
$\delta\in\mathbb{N}^s$, a contradiction.
\end{proof}

\section{Induced matchings and the \text{v}-number}
In this section we show that the 
induced matching number of a graph $G$ is an upper
bound for the v-number of $I(G)$ when $G$ is very well-covered, or $G$ 
has a simplicial partition, or $G$ is well-covered connected and
contain neither $4$- nor $5$-cycles. We classify when the upper bound holds when $G$ is a
cycle, and classify when all vertices of a graph $G$ are shedding
vertices, we use this to gain insight on $W_2$-graphs. To avoid repetitions, we continue to employ 
the notations and definitions used in Sections~\ref{intro-section} and
\ref{prelim-section}.

\begin{theorem}\label{v(I)}{\rm \cite[Theorem~3.5]{v-number}}
If $I = I(G)$ is the edge ideal of a graph $G$, then
$\mathcal{F}_G\subset\mathcal{A}_G$ and the $\mathrm{v}$-number of $I$ is
$$
\mathrm{v}(I) = \min\{\vert A \vert : A\in\mathcal{A}_G\}. 
$$
\end{theorem}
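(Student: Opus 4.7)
The plan is to combine the general formula of Theorem~\ref{vnumber-general}(d)---which applies since $I=I(G)$ is squarefree and therefore has no embedded primes---with a combinatorial description of the squarefree monomials in $(I:\mathfrak{p}_C)\setminus I$, where $C$ ranges over the minimal vertex covers of $G$. These index the associated primes of $I(G)$ via $\mathfrak{p}_C=(t_i\colon t_i\in C)$, so Theorem~\ref{vnumber-general}(d) gives $\mathrm{v}(I)=\min_C\alpha\bigl((I:\mathfrak{p}_C)/I\bigr)$, and the goal is to identify each inner minimum with $\min\{|B|\colon B\in\mathcal{A}_G,\ N_G(B)=C\}$.

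First I would verify $\mathcal{F}_G\subset\mathcal{A}_G$. If $A$ is a maximal stable set, then for every $v\notin A$ the set $A\cup\{v\}$ fails to be stable, so $v$ has a neighbor in $A$; thus $V(G)\setminus A\subseteq N_G(A)$, while $A$ stable gives $N_G(A)\cap A=\emptyset$. Hence $N_G(A)=V(G)\setminus A$, the minimal vertex cover complementary to the maximal stable set $A$, so $A\in\mathcal{A}_G$.

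The heart of the proof is the following description: for $B\subseteq V(G)$, the squarefree monomial $t^B$ lies in $(I:\mathfrak{p}_C)\setminus I$ if and only if $B$ is a stable set with $N_G(B)=C$. For the forward direction, $t^B\notin I$ forces $B$ to contain no edge, i.e., $B$ is stable; and for each $t_i\in C$ the condition $t_it^B\in I$ rules out $t_i\in B$ (else the support of $t_it^B$ would be the edge-free set $B$) and forces $t_i$ to have a neighbor in $B$, so $C\subseteq N_G(B)$. Since $C$ is a minimal vertex cover, $V(G)\setminus C$ is a maximal stable set containing $B$, so all neighbors of $B$ lie in $C$, giving $N_G(B)\subseteq C$ and hence $N_G(B)=C$. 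The converse is immediate: if $B$ is stable with $N_G(B)=C$, then each $t_i\in C$ has a neighbor in $B$, the corresponding edge divides $t_it^B$, and $t^B\in(I:\mathfrak{p}_C)\setminus I$. To reduce $\alpha$ to squarefree monomials, I would observe that $(I:\mathfrak{p}_C)=\bigcap_{t_i\in C}(I:t_i)$ is itself a squarefree monomial ideal, and for any monomial $t^a\in(I:\mathfrak{p}_C)\setminus I$ the squarefree monomial $t^B$ supported on $\mathrm{supp}(t^a)$ satisfies $|B|\leq\deg(t^a)$ and $t^B\in(I:\mathfrak{p}_C)\setminus I$, because membership in both $I$ and $(I:\mathfrak{p}_C)$ depends only on the support.

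Combining these, $\alpha((I:\mathfrak{p}_C)/I)=\min\{|B|\colon B\in\mathcal{A}_G,\ N_G(B)=C\}$, and taking the minimum over all minimal vertex covers $C$ gives $\mathrm{v}(I)=\min\{|A|\colon A\in\mathcal{A}_G\}$, as asserted. The main subtlety I expect is the squarefree reduction and the passage from the containment $C\subseteq N_G(B)$ that the colon computation directly yields to the equality $N_G(B)=C$ that is required to place $B$ in $\mathcal{A}_G$; both are handled by exploiting that $V(G)\setminus C$ is a maximal stable set whenever $C$ is a minimal vertex cover.
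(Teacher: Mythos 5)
Your proof is correct, and it is worth noting that the paper itself gives no proof of this statement: it is quoted verbatim from \cite[Theorem~3.5]{v-number}, so there is nothing in the present source to compare against line by line. Your argument is essentially the standard one from that reference, now routed through Theorem~\ref{vnumber-general}(d): you identify the squarefree monomials of $(I\colon\mathfrak{p}_C)\setminus I$ with the stable sets $B$ satisfying $N_G(B)=C$, reduce the computation of $\alpha$ to squarefree monomials via support, and minimize over the minimal vertex covers $C$. All the steps check out, including the two points you flag as delicate: the upgrade from $C\subseteq N_G(B)$ to $N_G(B)=C$ (via $B\subseteq V(G)\setminus C$ and the stability of that complement) and the squarefree reduction (via the fact that membership in a squarefree monomial ideal depends only on the support).
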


\begin{lemma}\label{stable}
Let $A$ be a stable set of a graph $G$. If $N_G(A)$ is a vertex cover
of $G$, then
$A \in {\mathcal A}_G$.
\end{lemma}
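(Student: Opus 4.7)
The plan is to show directly that $N_G(A)$ is a \emph{minimal} vertex cover of $G$ under the given hypotheses, which by definition places $A$ in $\mathcal{A}_G$. Since we are already given that $N_G(A)$ is a vertex cover, it suffices to verify that for each $v \in N_G(A)$ the set $N_G(A)\setminus\{v\}$ fails to cover some edge.

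First I would record the elementary observation that $A \cap N_G(A) = \emptyset$. This is an immediate consequence of $A$ being stable: if some $a \in A$ lay in $N_G(A)$, then $a$ would be adjacent to another element of $A$, contradicting stability.

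Next, for each fixed $v \in N_G(A)$, I would produce an edge that witnesses the non-redundancy of $v$. By definition of $N_G(A)$ there exists $a \in A$ with $\{a,v\} \in E(G)$. Since $a \in A$, the previous observation gives $a \notin N_G(A)$. Hence the edge $\{a,v\}$ has $v$ as its unique endpoint in $N_G(A)$, so $N_G(A)\setminus\{v\}$ no longer covers $\{a,v\}$. As $v$ was arbitrary, $N_G(A)$ is minimal, completing the proof.

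The argument is really a direct unpacking of the definitions, so no substantial obstacle arises; the only point to track carefully is that the stability of $A$ is precisely what forces $A \cap N_G(A) = \emptyset$, and this disjointness is exactly what makes each vertex of $N_G(A)$ necessary for the cover.
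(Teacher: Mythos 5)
Your proof is correct and follows essentially the same route as the paper's: both arguments observe that stability forces $A \cap N_G(A) = \emptyset$ and then, for each $v \in N_G(A)$, exhibit an edge from $v$ into $A$ whose only endpoint in $N_G(A)$ is $v$, showing the cover is minimal. No gaps.
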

\begin{proof}
We take any $b \in N_G(A)$, then there is $e \in E(G)$ such that $e \subset A \bigcup \{b\}$. 
Furthermore, $N_G(A) \bigcap A = \emptyset$, since $A$ is a stable set
of $G$. 
Thus, 
$$
e \textstyle\bigcap N_G(A) \subset (A\textstyle \bigcup
\{b\})\textstyle \bigcap N_G(A) \subset \{b\},
$$
and consequently $e\bigcap(N_G(A)\setminus\{b\})=\emptyset$. 
Hence, $N_G(A) \setminus \{ b\}$ is not a vertex cover of $G$, since
$e \in E(G)$. Therefore $N_G(A)$ is a minimal vertex cover of $G$ and
$A \in {\mathcal A}_G$
\end{proof}

\begin{theorem}\label{Domi-InduceMatch}
Let $G$ be a very well-covered graph and let $P=\{e_1, \ldots, e_r\}$
be a perfect matching of $G$. 
Then, there is an induced submatching $P'$ of $P$ and $D \in {\mathcal A}_G$
such that $D \subset V(P')$ and $\vert e \bigcap D \vert = 1$ for
each $e\in P'$. 
Furthermore ${\rm v}(I(G))\leq|P'|=|D|\leq{\rm im}(G)\leq{\rm
reg}(S/I(G))$.
\end{theorem}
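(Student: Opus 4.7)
The plan is to invoke Theorem~\ref{konig} so that $P$ satisfies property~$\mathbf{(P)}$, and Theorem~\ref{v(I)} so that ${\rm v}(I(G)) = \min\{|A| : A \in \mathcal{A}_G\}$. I pick $D = \{d_1,\dots,d_k\} \in \mathcal{A}_G$ with $|D|$ minimum, write $\phi$ for the involution on $V(G)$ swapping the two endpoints of each $P$-edge, and put $P' := \{\{d,\phi(d)\} : d \in D\} \subset P$. Since $D$ is stable and $\{d,\phi(d)\} \in E(G)$, one has $\phi(d) \notin D$, so $\phi|_D$ is an injection into $N_G(D)$; hence $|P'| = |D|$, $D \subset V(P')$, and $|e \cap D| = 1$ for every $e \in P'$.

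The substantive step is to show $P'$ is induced. I argue by contradiction: assume there is $\{u,v\} \in E(G) \setminus P'$ with $u,v \in V(P') = D \cup \phi(D)$. Stability of $D$ rules out $u,v \in D$, leaving two possibilities: (b) $u = d_i$, $v = \phi(d_j)$ with $i \neq j$; or (c) $u = \phi(d_i)$, $v = \phi(d_j)$ with $i \neq j$. In case~(b), property~$\mathbf{(P)}$ applied to the $P$-edge $\{d_j,\phi(d_j)\}$, taking $a' := d_i \in N_G(\phi(d_j))$, forces every neighbour of $d_j$ to be adjacent to $d_i$; hence $N_G(d_j) \subset N_G(d_i)$, which gives $N_G(D \setminus \{d_j\}) = N_G(D)$ and so $D \setminus \{d_j\} \in \mathcal{A}_G$, contradicting the minimality of $|D|$.

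Case~(c) reduces to case~(b) as soon as $\phi(d_i)$ has a $D$-neighbour other than $d_i$ or $\phi(d_j)$ has one other than $d_j$. In the remaining sub-case property~$\mathbf{(P)}$ yields $N_G(d_i) \subset N_G(\phi(d_j))$ and $N_G(d_j) \subset N_G(\phi(d_i))$. I then form $D^\star := (D \setminus \{d_i,d_j\}) \cup \{\phi(d_j)\}$: stability follows from the unique-$D$-neighbour hypothesis, and the two inclusions together with the complete-bipartite structure between $N_G(d_j) \setminus \{\phi(d_j)\}$ and $N_G(\phi(d_j)) \setminus \{d_j\}$ (forced by~$\mathbf{(P)}$) show that every edge once covered by $\phi(d_i)$, by $\phi(d_j)$, or by a private neighbour of $d_i$ or $d_j$ in $N_G(D)$ is still covered by $N_G(D^\star)$. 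Since $G$ is very well-covered, $|V(G)| = 2\alpha_0(G)$ forces every minimal vertex cover to have size $\alpha_0(G)$ (Proposition~\ref{konig-vwc}), and a cardinality count then pins $|N_G(D^\star)|$ at $\alpha_0(G)$, so $N_G(D^\star)$ is minimal and $D^\star \in \mathcal{A}_G$ with $|D^\star| = |D| - 1$, again contradicting minimality.

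With $P'$ induced, the chain of inequalities follows at once: ${\rm v}(I(G)) \leq |D|$ because $D \in \mathcal{A}_G$ (Theorem~\ref{v(I)}); $|D| = |P'|$ by construction; $|P'| \leq {\rm im}(G)$ by definition of the induced matching number; and ${\rm im}(G) \leq {\rm reg}(S/I(G))$ is Theorem~\ref{lower-bound-reg}(a) with $n = 1$. The main obstacle is sub-case~(c): the ``double swap'' $D \to D^\star$ is delicate, and showing that $N_G(D^\star)$ does not grow beyond $\alpha_0(G)$ — which forces it to remain a minimum, hence minimal, vertex cover — is where the characteristic identity $|V(G)| = 2\alpha_0(G)$ of very well-covered graphs enters decisively.
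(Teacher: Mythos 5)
Your overall strategy is genuinely different from the paper's (which peels off one matching edge at a time and runs an induction on $|P|$, building $D$ and $P'$ recursively rather than taking $D$ of minimum size). Your case~(b) is correct: if $\{d_i,\phi(d_j)\}\in E(G)$ then property~$\mathbf{(P)}$ does give $N_G(d_j)\subset N_G(d_i)$, so $N_G(D\setminus\{d_j\})=N_G(D)$ and $D\setminus\{d_j\}\in\mathcal{A}_G$, contradicting minimality. The reduction of case~(c) to case~(b) is also fine. The gap is in the remaining sub-case of~(c): the set $D^\star=(D\setminus\{d_i,d_j\})\cup\{\phi(d_j)\}$ need \emph{not} lie in $\mathcal{A}_G$. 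The inclusion $N_G(d_i)\subset N_G(\phi(d_j))$ takes care of every edge that $N_G(D)$ covered through $N_G(d_i)$, but an edge whose only endpoint in $N_G(D)$ is a private neighbour $p$ of $d_j$ is another matter: the complete join between $N_G(d_j)$ and $N_G(\phi(d_j))$ forced by $\mathbf{(P)}$ makes $p$ adjacent to every vertex \emph{of} $N_G(\phi(d_j))$, which is not the same as putting $p$ (or its partner in that edge) \emph{into} $N_G(\phi(d_j))$. The closing ``cardinality count'' does not help either: minimality of $N_G(D^\star)$ is automatic from Lemma~\ref{stable} once it is a vertex cover and $D^\star$ is stable; the unproven point is that it is a vertex cover at all.

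Here is an explicit failure. Take $V=\{d_1,f_1,d_2,f_2,p,p',q,q'\}$ with perfect matching $P=\{d_1f_1,\,d_2f_2,\,pp',\,qq'\}$ and further edges $f_1f_2,\ d_2p,\ d_2q,\ pq,\ f_1p,\ f_1q$. One checks property~$\mathbf{(P)}$ edge by edge, so $G$ is very well-covered by Theorem~\ref{konig}. The set $D=\{d_1,d_2\}$ is stable and $N_G(D)=\{f_1,f_2,p,q\}$ is a vertex cover, so $D\in\mathcal{A}_G$ by Lemma~\ref{stable}; moreover $\{f_1,f_2\}\in E(G)$, $N_G(f_1)\cap D=\{d_1\}$ and $N_G(f_2)\cap D=\{d_2\}$, so you are exactly in the last sub-case of~(c) with $(i,j)=(1,2)$. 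But $D^\star=\{f_2\}$ has $N_G(f_2)=\{d_2,f_1\}$, which misses the edges $pp'$, $qq'$ and $pq$: it is not a vertex cover, so $D^\star\notin\mathcal{A}_G$ and no contradiction with minimality is obtained. (Here the symmetric choice $\{f_1\}$ happens to work, and indeed $D$ was not minimum; but your argument neither singles out a correct choice nor proves that one of the two always succeeds, and the justification you give applies verbatim to the choice that fails.) So even if the underlying claim --- that a minimum-cardinality $D\in\mathcal{A}_G$ always yields an induced submatching $\{\{d,\phi(d)\}:d\in D\}$ --- turns out to be true, case~(c) is not proved; this is precisely the configuration the paper's inductive construction is designed to sidestep, since the $D$ it produces is not required to be of minimum size.
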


\begin{proof} To show the first part we use induction on $\vert P
\vert$. If $r=1$, we set $P'=P=\{e_1\}$ and $D=\{x_1\}$, where
$e_1=\{x_1,y_1\}$. Assume $r>1$.  We set $e_r=\{x,x'\}$, 
$G_1: = G\setminus\{x,x'\}$ and $P_1:= P \setminus \{e_r\}$.  
By Theorem~\ref{konig}, $P$ satisfies the property {\bf{(P)}}. 
Then, $P_1$ satisfies the property {\bf{(P)}} as well. Thus, by Theorem \ref{konig},
 $G_1$ is very well-covered with a perfect matching $P_1$. Hence, by induction hypothesis, there is
an induced submatching $P'_1$ of $P_1$ and $D_1 \in {\mathcal A}_{G_1}$ such that  $D_1 \subset  V(P'_1)$
and $\vert e \bigcap D_1 \vert = 1$ for each $e \in P'_1$. Consequently, $N_{G_1}(D_1)$ is a minimal vertex
cover of $G_1$. We will consider
two cases: $e_r \bigcap N_G(D_1) \neq \emptyset$ and $e_r \bigcap N_G(D_1) = \emptyset$.

Case (I): Assume that $e_r \bigcap N_G(D_1) \neq \emptyset$. Thus, we
may assume that there is $\{x,d\} \in E(G)$
with $d \in D_1$. Then, $N_G(x')\subset N_G(d) \subset N_G(D_1)$, since $P$ satisfies
property  {\bf{(P)}}. Hence, $N_G(D_1)$ is a vertex cover of $G$, since $N_{G_1}(D_1)$ is a vertex cover of
$G_1$ and $\{x\} \subset N_G(x') \subset N_G(D_1)$. Therefore, by
Lemma~\ref{stable},
$D_1 \in {\mathcal A}_G$, so this case follow by making $D=D_1$ and $P'=P'_1$.

Case (II): Assume that $e_r \bigcap N_G(D_1) = \emptyset$. We set
$D_2:=V(P'_1) \setminus D_1$, then $D_2$ 
is a stable set of $G_1$ and also of $G$, since $P'_1$ is an induced matching
of $G_1$ and also of $G$. One has the
inclusion
\begin{equation}\label{sep10-21-1}
V(P'_1)\textstyle \bigcap (N_G(x)\textstyle\bigcup N_G(x')) \subset
D_2,
\end{equation}
indeed take $z\in V(P'_1)\bigcap N_G(x)$ (the case $z\in
V(P'_1)\bigcap N_G(x')$ is similar). If $z\notin D_2$, then $z\in
D_1\bigcap N_G(x)$, $\{z,x\}\in E(G)$, and $x\in e_r\bigcap N_G(D_1)$, a
contradiction. We claim that $\vert e_r \bigcap N_G(D_2) \vert \leq
1$. By contradiction suppose 
$x,x' \in N_G(D_2)$, then there are $d_1,d_2 \in D_2$ such that 
$\{x,d_1\},\{x',d_2\} \in E(G)$. Then $\{d_1,d_2\} \in E(G)$, since $P$ satisfies
property {\bf{(P)}},  a contradiction, since $D_2$ is a stable set of
$G$. 
Hence, $\vert e_r \bigcap N_G(D_2) \vert \leq 1$ and we may assume 
\begin{equation}\label{sep10-21-2}
e_r \textstyle\bigcap N_G(D_2) \subset \{x\}.
\end{equation}
\quad Next we show that $V(P_1')\bigcap N_G(x')=\emptyset$. If the
intersection is non-empty, by Eq.~\eqref{sep10-21-1}, we can
pick $z$ in $D_2\bigcap N_G(x')$, then $\{z,x'\}\in E(G)$ and $x'\in
N_G(D_2)$, a contradiction to Eq.~\eqref{sep10-21-2}. Therefore, by
Eq.~\eqref{sep10-21-1}, we obtain the inclusion
$$  
V(P'_1)\textstyle \bigcap (N_G(x)\textstyle\bigcup N_G(x')) \subset
D_2 \textstyle\bigcap N_G(x) =:A_2, 
$$
\quad Thus, the edge set $Q:=\{e \in P'_1  \mid  e \bigcap A_2 =
\emptyset \} \bigcup \{e_r\}$, is 
an induced matching, since  $P'_1$ is an induced matching.
Setting 
\begin{equation}\label{oct2-21}
D_3:=\{ y \in D_1\mid \{y,y'\} \in P'_1 \mbox{ with } y'\notin A_2\}\textstyle \bigcup \{x\},
\end{equation}
i.e., $D_3=(D_1 \bigcap V(Q)) \bigcup \{x\}$, we get $\vert f \bigcap D_3 \vert = 1$ for each $f\in Q$,
since  $\vert e \bigcap D_1 \vert = 1$ for each $e \in P'_1$.
Note that $D_3$ is a stable set of $G$, since $D_1$ is a stable set and 
$\{x\} \bigcap N_G(D_1) = \emptyset$.
Now, take $ e\in E(G)$. We will prove that $e \bigcap N_G(D_3) \neq
\emptyset$. Clearly $N_G(x)\subset N_G(D_3)$ because $x\in D_3$. 
If $x' \in e$, then $x' \in e \bigcap N_G(x)  \subset e \bigcap  N_G(D_3)$. Now, if
$x \in e$, then $e=\{x,y\}$ for some $y$ in $V(G)$, and $y \in e \bigcap  N_G(x)  \subset e \bigcap N_G(D_3)$.
So, we may assume $e \bigcap \{x,x'\}=\emptyset$, then $e \in E(G_1)$.
Thus, there is $z \in e \bigcap N_{G_1}(D_1)$, since   $N_{G_1}(D_1)$
is a vertex cover of $G_1$. Then, there is $d \in D_1$, such that
$z \in N_{G_1}(d)$. If $d \in D_3$, then $z \in N_G(D_3) \bigcap e$.
Finally, if $d \notin D_3$, then by Eq.~\eqref{oct2-21} and the inclusion
$D_1\subset V(P_1')$, there is $d' \in A_2$ such that $\{d,d'\} \in P'_1$.
So, $\{x,d'\} \in E(G)$, since $d' \in A_2$ .
This implies, $ \{x,z\} \in E(G)$, since $\{d,z\}\in E(G)$,
$\{x,d'\}\in E(G)$, $\{d,d'\} \in P$, and 
$P$ satisfies property {\bf{(P)}}. Thus,  $z \in e \bigcap N_G(x)  \subset e \bigcap N_G(D_3)$.
Hence, $N_G(D_3)$ is a vertex cover and, by Lemma~\ref{stable}, $D_3 \in {\mathcal A}_G$.
Therefore, this case follows by making $P'=Q$ and $D=D_3$. This
completes the induction process. 

Next we show the equality $|P'|=|D|$. By the first part, we may assume that
$P'=\{e_1,\ldots,e_\ell\}$, $1\leq \ell\leq r$, $e_i=\{x_i,y_i\}$ for
$i=1,\ldots,\ell$, and $x_1,\ldots,x_\ell\in D$. Thus, $\ell=|P'|\leq
|D|$ and, since $D\subset V(P')$, we get $2|D|\leq 2|P'|$. 
Then $|P'|=|D|$. The inequality ${\rm v}(I(G))\leq |D|$ follows by Theorem~\ref{v(I)} 
and $|P'|\leq{\rm im}(G)$ is clear by definition of ${\rm
im}(G)$. Finally, the inequality ${\rm im}(G)\leq{\rm
reg}(S/I(G))$ follows directly from Theorem~\ref{lower-bound-reg}. 
\end{proof}

\begin{corollary}\cite[Theorem 3.19(b)]{v-number}\label{sep29-21} Let $G$ be a graph and let $W_G$ be
its whisker 
graph. Then 
$$
{\rm v}(I(W_G))\leq{\rm reg}(K[V(W_G)]/I(W_G)).
$$
\end{corollary}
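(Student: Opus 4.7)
The plan is to derive the corollary as a direct consequence of Theorem~\ref{Domi-InduceMatch} applied to the whisker graph $W_G$. The only verification needed is that $W_G$ satisfies the hypothesis of being very well-covered and comes equipped with a perfect matching.

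First, I would set up the relevant combinatorial data. Writing $V(G)=\{t_1,\ldots,t_s\}$ and letting $u_1,\ldots,u_s$ denote the whisker vertices, the set $P=\{\{t_i,u_i\}\}_{i=1}^s$ of pendant edges is visibly a perfect matching of $W_G$, and every vertex of $W_G$ is incident to a pendant edge, so $W_G$ has no isolated vertices. By Lemma~\ref{bipartite-whiskers}(b), the graph $W_G$ is very well-covered.

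Second, I would invoke Theorem~\ref{Domi-InduceMatch} with $W_G$ in place of $G$ and $P$ as the distinguished perfect matching. That theorem produces an induced submatching $P'\subset P$ and a set $D\in\mathcal{A}_{W_G}$ with $D\subset V(P')$, together with the chain of inequalities
$$
{\rm v}(I(W_G))\leq|P'|=|D|\leq{\rm im}(W_G)\leq{\rm reg}(K[V(W_G)]/I(W_G)),
$$
whose outermost comparison is precisely the asserted bound.

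There is essentially no obstacle to overcome: all of the combinatorial work has been absorbed into the proof of Theorem~\ref{Domi-InduceMatch}, and the whisker graph is the cleanest possible setting since the pendant edges supply a canonical perfect matching on which the inductive construction of $P'$ and $D$ can be run. The only thing to check is the very well-covered hypothesis, which is handled by Lemma~\ref{bipartite-whiskers}(b) (or equivalently by observing directly that the pendant matching satisfies property \textbf{(P)} of Definition~\ref{p-def1} and then invoking Theorem~\ref{konig}).
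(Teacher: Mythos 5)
Your proposal is correct and coincides with the paper's own argument: both cite Lemma~\ref{bipartite-whiskers}(b) to see that $W_G$ is very well-covered and then apply Theorem~\ref{Domi-InduceMatch} to conclude. The extra remark that the pendant edges form the canonical perfect matching satisfying property \textbf{(P)} is exactly how the lemma itself is proved, so nothing is missing.
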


\begin{proof} By Lemma~\ref{bipartite-whiskers}, $W_G$ is very
well-covered. Thus, by Theorem~\ref{Domi-InduceMatch}, the v-number of
$I(W_G)$ is bounded from above by the regularity of 
$K[V(W_G)]/I(W_G)$.
\end{proof}

\begin{lemma}\label{bound}
Let $\ell\geq 0$ and $s=4\ell+r$ be integers with $r \in \{0,1,2,3\}$.
If $s \geq 3$ and $s \neq 5$, then  
$$
\left\lfloor \frac{s} {3} \right\rfloor \geq  \ell \mbox { if }  r=0  \mbox{ and  } \left\lfloor \frac{s}{3} \right\rfloor \geq \ell+1 \mbox{ otherwise}.
$$
\end{lemma}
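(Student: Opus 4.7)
The plan is to split into four cases according to the value of $r\in\{0,1,2,3\}$ and, in each case, use the elementary fact that for an integer $n$ one has $\lfloor x\rfloor\geq n$ if and only if $x\geq n$. This reduces each floor inequality to a linear inequality in $\ell$.

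First I would determine the admissible range of $\ell$ in each case from the hypotheses $s\geq 3$ and $s\neq 5$. For $r=0$ the constraint $s=4\ell\geq 3$ forces $\ell\geq 1$; for $r=1$ the constraint $s=4\ell+1\geq 3$ and $s\neq 5$ forces $\ell\geq 2$; for $r=2$ we get $\ell\geq 1$; for $r=3$ we get $\ell\geq 0$. The exclusion of $s=5$ is used only in case $r=1$, and indeed this is where the lemma would fail otherwise, since $\lfloor 5/3\rfloor=1<2=\ell+1$.

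Next, in each case, I would rewrite the desired inequality using the equivalence above. For $r=0$, it becomes $4\ell/3\geq \ell$, i.e., $\ell\geq 0$. For $r=1$, it becomes $(4\ell+1)/3\geq\ell+1$, i.e., $\ell\geq 2$. For $r=2$, it becomes $(4\ell+2)/3\geq\ell+1$, i.e., $\ell\geq 1$. For $r=3$, it becomes $(4\ell+3)/3\geq\ell+1$, i.e., $\ell\geq 0$. In each situation the required bound on $\ell$ matches exactly the admissible range determined in the previous step, so the inequality holds.

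There is no real obstacle; the only subtlety is noticing that the exclusion $s\neq 5$ corresponds precisely to ruling out the single bad value $\ell=1$ in the case $r=1$, which is the only borderline case where the linear inequality would otherwise fail.
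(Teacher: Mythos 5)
Your proof is correct. The central tool, the equivalence $\lfloor x\rfloor\geq n\iff x\geq n$ for an integer $n$, is valid (one direction because $n$ is an integer not exceeding $x$, the other because $x\geq\lfloor x\rfloor$), and it correctly converts each of the four floor inequalities into a linear inequality in $\ell$: namely $\ell\geq 0$, $\ell\geq 2$, $\ell\geq 1$, $\ell\geq 0$ for $r=0,1,2,3$ respectively. Your determination of the admissible range of $\ell$ from $s\geq 3$ and $s\neq 5$ is also correct in each case (with the harmless caveat that for $r=0$ the hypothesis gives $\ell\geq 1$ while only $\ell\geq 0$ is needed, so the match is by containment rather than equality there). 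This is a genuinely different route from the paper's. The paper computes $\lfloor s/3\rfloor$ explicitly by writing $s\equiv r'\pmod 3$, so that $\lfloor s/3\rfloor=\ell+a$ with $a:=(\ell+r-r')/3\in\mathbb{Z}$, and then shows $a\geq 0$ (resp.\ $a\geq 1$) by a contradiction argument that juggles the two remainders $r$ and $r'$ to isolate $s=5$ as the only bad case. Your argument dispenses with the mod-$3$ bookkeeping entirely: the equivalence for floors does all the work, each case reduces to one line of algebra, and the role of the exclusion $s\neq 5$ (ruling out $\ell=1$ when $r=1$) becomes transparent. Both proofs are elementary; yours is shorter and makes the structure of the exception clearer.
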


\begin{proof}
By the division algorithm, $ s\equiv r' \pmod{3}$, where $r' \in \{0,1,2\}$. Then 
$$
\left\lfloor \frac{s} {3} \right\rfloor= \frac{4\ell+r-r'} {3}=\ell+ \frac{\ell+r-r'} {3} \in \mathbb{Z}.
$$
\quad Thus, $a:= \frac{\ell+r-r'} {3} \in \mathbb{Z}$. If $r=0$, then
$a \geq 0$. This follows using that $0\leq
r'\leq 2$ and $\ell\geq 0$. Hence,  $ \left\lfloor \frac{s} {3}
\right\rfloor \geq  \ell $. Now, assume 
$r \in \{1, 2, 3\}$. We claim that $a \geq 1$. By contradiction assume
that $a \leq 0$, then  $\ell+r \leq r'$. If $\ell=0$, then $s=r=3$, 
since $s \geq 3$. A contradiction, since $3=\ell+r \leq r'$  and $r' \leq 2$. 
Thus, $\ell \geq 1$ and we have $2 \leq \ell+1 \leq \ell +r \leq r' \leq 2$. This implies
$\ell=1=r$ and $r'=2$. Consequently $s=5$, a contradiction. 
Therefore,  $a \geq 1$ and $\left\lfloor \frac{s}{3} \right\rfloor \geq \ell+1$. 
\end{proof}

\begin{theorem}\label{cycles-indmat}
Let $C_s$ be an $s$-cycle and let $I(C_s)$ be its edge ideal. Then,
${\rm v}(I(C_s))\leq{\rm im}(C_s)$ if and only if $s \neq 5$.
\end{theorem}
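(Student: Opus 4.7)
The plan is to give a combinatorial description of $\mathcal{A}_{C_s}$ in terms of cyclic gap patterns, use it to produce explicit small members $A$ of $\mathcal{A}_{C_s}$, and then invoke Lemma~\ref{bound} to compare with ${\rm im}(C_s)$. The exceptional case $s=5$ is handled by a direct check.

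First I would use Theorem~\ref{v(I)} to reduce the question to finding the minimum size of a set in $\mathcal{A}_{C_s}$. Label the vertices $t_1,\ldots,t_s$ cyclically. By Lemma~\ref{stable}, a stable set $A\subset V(C_s)$ belongs to $\mathcal{A}_{C_s}$ as soon as $N_{C_s}(A)$ is a vertex cover. Listing the members of $A$ in cyclic order and letting $d_1,\ldots,d_{|A|}$ denote the consecutive cyclic distances (so $d_1+\cdots+d_{|A|}=s$), stability of $A$ becomes $d_i\geq 2$, and the vertex-cover condition becomes $d_i\leq 4$: a gap of length $\geq 5$ contains two adjacent vertices neither of which has a neighbor in $A$, leaving their common edge uncovered; conversely, within a gap of length $2$, $3$, or $4$ every intermediate edge is incident to some vertex adjacent to an endpoint of the gap, so $N_{C_s}(A)$ covers all edges.

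I would then minimize $|A|$ among sequences $(d_1,\ldots,d_k)$ in $\{2,3,4\}$ summing to $s$, obtaining the minimum $k=\lceil s/4\rceil$. Writing $s=4\ell+r$ with $r\in\{0,1,2,3\}$, this minimum equals $\ell$ when $r=0$ and $\ell+1$ otherwise. Explicit realizations: for $r=0$ take $\ell$ gaps of $4$; for $r=3$ take $\ell$ fours and one $3$; for $r=2$ take $\ell-1$ fours and two $3$s (requiring $\ell\geq 1$); for $r=1$ take $\ell-2$ fours and three $3$s (requiring $\ell\geq 2$). The only instance with $s\geq 3$ in which the target size cannot be realized is precisely $r=1,\ \ell=1$, i.e.\ $s=5$, since a single gap of length at most $4$ cannot span a $5$-cycle; here $|A|\geq 2$ is forced (witnessed by $N_{C_5}(\{t_i\})=\{t_{i-1},t_{i+1}\}$ failing to cover $\{t_{i+2},t_{i+3}\}$).

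Finally I would close the comparison. For $s\neq 5$, the construction produces $A\in\mathcal{A}_{C_s}$ of size $\ell$ or $\ell+1$, and Lemma~\ref{bound} yields $|A|\leq\lfloor s/3\rfloor$. Combining with the standard identity ${\rm im}(C_s)=\lfloor s/3\rfloor$ gives ${\rm v}(I(C_s))\leq{\rm im}(C_s)$. For $s=5$, the singleton analysis above gives ${\rm v}(I(C_5))=2>1={\rm im}(C_5)$, so the inequality fails. I expect the main obstacle to be the residue-class bookkeeping in the construction step—particularly the threshold conditions $\ell\geq 1$ for $r=2$ and $\ell\geq 2$ for $r=1$—as this is precisely the bookkeeping that exhibits $C_5$ as the unique exception.
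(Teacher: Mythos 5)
Your proposal is correct and takes essentially the same route as the paper: both construct, for each residue of $s$ modulo $4$, an explicit stable set $A$ with $N_{C_s}(A)$ a vertex cover (so $A\in\mathcal{A}_{C_s}$ by Lemma~\ref{stable}) of size $\ell$ or $\ell+1$, invoke Lemma~\ref{bound} to compare with $\lfloor s/3\rfloor={\rm im}(C_s)$, and dispose of $s=5$ by a direct check; your gap-pattern formulation is just a cleaner packaging of the paper's explicit vertex lists. One harmless slip: for $s=5$ the ``target size'' $\ell+1=2$ \emph{is} realizable (gaps $(2,3)$, e.g.\ $A=\{t_1,t_3\}$) — the true obstruction at $s=5$ is that $\lfloor s/3\rfloor=1<2$, i.e.\ the failure of Lemma~\ref{bound}'s conclusion, not of the realization — but your separate verification that ${\rm v}(I(C_5))=2>1$ makes the argument go through regardless.
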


\begin{proof}
$\Rightarrow$) Assume that ${\rm v}(I(C_s))\leq{\rm im}(C_s)$. If $s=5$,
then ${\rm v}(I(C_s))=2$ and ${\rm im}(C_s)=1$, a contradiction. Thus
$s\neq 5$.

$\Leftarrow$) Assume that $s\neq 5$. We can write
$C_s=(t_1,e_1,t_2,\ldots, t_i,e_i,t_{i+1}, \ldots, t_s,e_s,t_1)$. The
matching $P=\{e_1,e_4, \ldots, e_{3q-2}\}$, where $q:=\lfloor \frac{s}{3} \rfloor$, 
is an induced matching of $C_s$ and $\vert P \vert=q$. 
Now we choose a stable set $A$ of $C_s$, for each one of the following cases:

Case $s=4\ell$. If $A=\{t_2, t_6, \ldots, t_{4\ell-2}\}$, then 
$N_{C_s}(A)=\{t_1,t_3,t_5,t_7, \ldots, t_{s-3},t_{s-1}\}$ is a vertex
cover of $G$ and  $\vert A \vert =\ell$.

Case $s=4\ell+1$. If $A=\{t_2, t_6, \ldots, t_{4\ell-2}\} \bigcup \{t_{4\ell}\}$, then 
$N_{C_s}(A)=\{t_1,t_3, \ldots, t_{s-4},t_{s-2}\} \bigcup \{t_s\}$ is a
vertex cover of $G$ and
$\vert A \vert =\ell+1$.

Case $s=4\ell+2$. If $A=\{t_2, t_6, \ldots, t_{4\ell+2}\}$, then 
$N_{C_s}(A)=\{t_1,t_3,t_5,t_7, \ldots, t_{s-3},t_{s-1}\}$ is a vertex
cover of $G$ and
$\vert A \vert =\ell+1$.

Case $s=4\ell+3$. If $A=\{t_2, t_6, \ldots, t_{4\ell+2}\}$, then 
$N_{C_s}(A)=\{t_1,t_3,t_5,t_7, \ldots, t_{s-2},t_s\}$ is a vertex
cover of $G$ and
$\vert A \vert =\ell+1$.

In each case $N_{C_s}(A)=\{t_i \vert i \mbox{ is odd} \}$ and 
$N_{C_s}(A)$ is a vertex cover of $G$. 
So, by Lemma~\ref{stable}, $A \in {\mathcal A}_{C_s}$. Now, assume 
$s=4\ell+r$, with $r \in \{0,1,2,3\}$ and $\ell\geq 0$ an integer.
Then, by Lemma~\ref{bound},  
$\lfloor \frac{s} {3} \rfloor \geq  \ell$
 if $ r=0$ and $\lfloor \frac{s}{3} \rfloor \geq \ell+1$ otherwise. 
Hence, $\vert  P \vert = \lfloor \frac{s}{3} \rfloor   \geq \vert A \vert$. 
Therefore, ${\rm im}(C_s) \geq
{\rm v}(I(C_s))$, since ${\rm im}(C_s) \geq \vert P \vert$ 
and  $\vert A \vert \geq{\rm v}(I(C_s))$. 
\end{proof}

\begin{remark} The induced matching number of the cycle $C_s$ is equal to 
$\lfloor\frac{s}{\scriptstyle 3} \rfloor$. The regularity of
$S/I(C_s)$ is equal to $\lfloor (s+1)/3\rfloor$ 
\cite[Proposition~10]{woodroofe-matchings}.
\end{remark}

\begin{lemma}\label{induced-lemma}
Let $G$ be a graph without isolated vertices and let $z_1,\ldots,z_m$
be vertices of $G$ such that $\{N_G[z_i]\}_{i=1}^m$ is a partition of
$V(G)$. If $G_1=G\setminus N_G[z_m]$, then
\begin{enumerate}
\item[(i)] $N_{G_1}[z_i]=N_G[z_i]$ for $i<m$, and
\item[(ii)] $G_1[N_{G_1}[z_i]]=G[N_G[z_i]]$ for $i<m$.
\end{enumerate}
\end{lemma}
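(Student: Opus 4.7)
The plan is to extract everything from the disjointness built into the hypothesis: since $\{N_G[z_i]\}_{i=1}^m$ partitions $V(G)$, the sets $N_G[z_i]$ are pairwise disjoint, and in particular $N_G[z_i]\cap N_G[z_m]=\emptyset$ for every $i<m$.

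For part (i), fix $i<m$. First I would note that $z_i\in N_G[z_i]$, so by disjointness $z_i\notin N_G[z_m]$, meaning $z_i$ is a vertex of $G_1=G\setminus N_G[z_m]$. By definition of the induced subgraph, the neighborhood of $z_i$ in $G_1$ is $N_{G_1}(z_i)=N_G(z_i)\setminus N_G[z_m]$, and hence $N_{G_1}[z_i]=N_G[z_i]\setminus N_G[z_m]$. But every element of $N_G[z_i]$ already lies outside $N_G[z_m]$, again by the disjointness supplied by the partition, so this set difference equals $N_G[z_i]$. This yields (i).

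For part (ii), once (i) is established, the two induced subgraphs $G_1[N_{G_1}[z_i]]$ and $G[N_G[z_i]]$ are taken on the same vertex set. Because $G_1$ itself is an induced subgraph of $G$, the edges between any two vertices of $N_{G_1}[z_i]\subset V(G_1)$ are the same in $G_1$ and in $G$; that is, $E(G_1[N_{G_1}[z_i]])=E(G[N_G[z_i]])$. So (ii) follows immediately from (i).

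The argument really is just a careful unwinding of the definitions of induced subgraph and closed neighborhood, so I do not anticipate any serious obstacle; the only point that requires attention is to invoke the partition hypothesis to guarantee $N_G[z_i]\cap N_G[z_m]=\emptyset$ before concluding that no vertex of $N_G[z_i]$ is removed when passing to $G_1$.
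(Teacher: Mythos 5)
Your proof is correct and follows essentially the same route as the paper's: both arguments reduce everything to the disjointness $N_G[z_i]\cap N_G[z_m]=\emptyset$ coming from the partition hypothesis, and both use that $G_1$ is an induced subgraph to conclude that neighborhoods and induced edges are preserved. The only cosmetic difference is that you compute $N_{G_1}[z_i]=N_G[z_i]\setminus N_G[z_m]$ directly, whereas the paper verifies the two inclusions separately; the content is identical.
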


\begin{proof} (i): Assume that $1\leq i\leq m-1$. Clearly
$N_{G_1}[z_i]\subset N_G[z_i]$ because $G_1$ is a subgraph of $G$. To
show the inclusion ``$\supset$'' take $z\in N_G[z_i]$. Then, $z=z_i$
or $\{z,z_i\}\in E(G)$. If $z\in
N_G[z_m]$, then $z\in N_G[z_m]\bigcap N_G[z_i]$,
a contradiction. Thus, $z\notin N_G[z_m]$ and, since $G_1$ is an
induced subgraph of $G$, we get $z=z_i$ or $\{z,z_i\}\in E(G_1)$.
Thus, $z\in N_{G_1}[z_i]$.

(ii): By part (i), one has
$N_G[z_i]=N_{G_1}[z_i]\subset V(G)\setminus N_G[z_m]=V(G_1)$. Then 
\begin{align*}
E(G[N_G[z_i]])&=\{e\in E(G)\mid e\subset N_G[z_i]\}=\{e\in E(G)\mid
e\subset N_{G_1}[z_i]\} \\
&=\{e\in E(G_1)\mid e\subset N_{G_1}[z_i]\}=E(G_1[N_{G_1}[z_i]]).
\end{align*}
\quad Thus, $E(G[N_G[z_i]])=E(G_1[N_{G_1}[z_i]])$.
\end{proof}

\begin{theorem}\label{Domi-InduceMatch-simplex}
Let $G$ be a graph with simplexes $H_1, \ldots, H_r$,
such that $\{V(H_i)\}_{i=1}^r$ is a partition of $V(G)$.
If $G$ has no isolated vertices, then there
is $D=\{y_1,\ldots,y_k\} \in {\mathcal A}_G$,  
and there are simplicial vertices $x_1,\ldots,x_k$ of $G$ and integers
$1\leq j_1< \cdots <j_k \leq r$ such that $P=\{\{x_i,y_i\}\}_{i=1}^k$ is an induced matching of $G$ and
 $H_{j_i}$ is the induced subgraph $G[N_G[x_i]]$ on $N_G[x_i]$ for $i=1,\ldots,k$. Furthermore ${\rm
 v}(I(G))\leq|D|=|P|\leq{\rm im}(G)\leq{\rm reg}(S/I(G))$. 
\end{theorem}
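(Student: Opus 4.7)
My plan is to prove the structural claim by induction on $r$ and then derive the inequality chain from previously established results. For the base case $r=1$, the graph $G=H_1$ is a complete graph with $|V(H_1)|\ge 2$ (no isolated vertices); picking the simplicial vertex $v_1$ with $H_1=G[N_G[v_1]]$ and any $y_1\in N_G(v_1)$, the singleton $D=\{y_1\}$ is stable and $N_G(y_1)=V(H_1)\setminus\{y_1\}$ is a vertex cover of the complete graph, so Lemma~\ref{stable} yields $D\in\mathcal{A}_G$, and $P=\{\{v_1,y_1\}\}$ is trivially induced.

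For the inductive step, let $v_r$ be a simplicial vertex with $H_r=G[N_G[v_r]]$, set $G_1=G\setminus N_G[v_r]$, and write $B_r:=V(H_r)\setminus\{v_r\}$. By Lemma~\ref{induced-lemma}, $H_1,\ldots,H_{r-1}$ remain simplexes of $G_1$ and form a partition of $V(G_1)$; moreover $G_1$ has no isolated vertices, because each simplex has at least two vertices and its apex lies in $V(G_1)$, so every $w\in V(G_1)\cap V(H_i)$ has a neighbor in $V(H_i)\subset V(G_1)$. The induction hypothesis yields $D_1=\{y_1,\ldots,y_k\}\in\mathcal{A}_{G_1}$, simplicial vertices $x_1,\ldots,x_k$ (simplicial in both $G_1$ and $G$), indices $1\le j_1<\cdots<j_k\le r-1$ with $H_{j_i}=G[N_G[x_i]]$, and an induced matching $P_1=\{\{x_i,y_i\}\}_{i=1}^k$ of $G_1$, which is still induced in $G$ since $V(P_1)\subset V(G_1)$. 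I split into two cases: in Case~I ($B_r\subset N_G(D_1)$) I take $D=D_1$ and $P=P_1$; every edge of $G$ is then covered since edges inside $V(G_1)$ are covered by $N_{G_1}(D_1)$, edges inside $V(H_r)$ have at least one endpoint in $B_r$ as $H_r$ is simple and complete, and edges between $V(G_1)$ and $V(H_r)$ have their $H_r$-endpoint in $B_r$ because $N_G(v_r)\subset V(H_r)$. In Case~II I pick $w\in B_r\setminus N_G(D_1)$ and set $D=D_1\cup\{w\}$, $P=P_1\cup\{\{v_r,w\}\}$, $x_{k+1}=v_r$, $y_{k+1}=w$, $j_{k+1}=r$; then $D$ is stable since $w\notin N_G(D_1)$, and the vertex-cover verification mirrors Case~I, now using that $N_G(w)\supset V(H_r)\setminus\{w\}$.

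The main obstacle is verifying that $P$ is an induced matching in Case~II. The only potential extra edges lie between $V(P_1)$ and $\{v_r,w\}$: no edge can contain $v_r$ because $N_G(v_r)\subset V(H_r)$ is disjoint from $V(P_1)\subset V(G_1)$; an edge $\{w,y_i\}$ would put $w\in N_G(D_1)$, contradicting the choice of $w$; and an edge $\{w,x_i\}$ would force $w\in N_G(x_i)\subset V(H_{j_i})$, contradicting $w\in V(H_r)$ together with $j_i<r$ and the partition property. This is precisely where the fact that each $x_i$ is the apex of its simplex becomes essential. Given the structural conclusion, Lemma~\ref{stable} gives $D\in\mathcal{A}_G$, and the chain ${\rm v}(I(G))\le|D|=|P|\le{\rm im}(G)\le{\rm reg}(S/I(G))$ follows from Theorem~\ref{v(I)}, the construction (one $y_i$ per edge of $P$), the definition of ${\rm im}(G)$, and Theorem~\ref{lower-bound-reg}(a) with $n=1$.
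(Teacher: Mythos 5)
Your proposal is correct and follows essentially the same route as the paper: induction on $r$, deleting the last simplex $V(H_r)=N_G[v_r]$, invoking Lemma~\ref{induced-lemma} to preserve the simplex partition, splitting into the cases $V(H_r)\setminus\{v_r\}\subset N_G(D_1)$ versus picking $w\in V(H_r)\setminus(\{v_r\}\cup N_G(D_1))$, and finishing with Lemma~\ref{stable}, Theorem~\ref{v(I)} and Theorem~\ref{lower-bound-reg}. Your explicit checks that $G_1$ has no isolated vertices and that the induced-matching property survives in Case~II are slightly more detailed than the paper's, but the argument is the same.
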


\begin{proof} We proceed by induction on $r$. If $r=1$, then $V(H_1)=V(G)$ and there is a
simplicial vertex $x_1$ of $G$ such that $H_1=G[N_G[x_1]]$ is a
complete graph with at least two vertices. Picking $y_1\in
N_G[x_1]$, $y_1\neq x_1$, one has $\{x_1\}\in\mathcal{A}_G$ and
$\{x_1,y_1\}$ is an induced matching. Now assume that $r>1$. 
We set $G_1: = G \setminus V(H_r)$. Note that
$H_1,\ldots,H_{r-1}$ are simplexes of $G_1$
(Lemma~\ref{induced-lemma}) and $\{V(H_i)\}_{i=1}^{r-1}$ is a
partition of $V(G_1)$. Then, by induction hypothesis, there is  
$D_1 =\{y_1,\ldots, y_{k'}\} \in {\mathcal A}_{G_1}$, and there are
simplicial vertices $x_1,\ldots,x_{k'}$ of $G_1$ and integers 
$1\leq j_1<\cdots <j_{k'} \leq r-1$, such that 
$P_1=\{\{x_1,y_1\}, \ldots, \{x_{k'},y_{k'}\}\}$ 
is an induced matching of $G_1$ and
$H_{j_i}=G_1[N_{G_1}[x_i]]$ for $i=1,\ldots,k'$. By 
Lemma~\ref{induced-lemma}, one has
$G_1[N_{G_1}[x_i]]=G[N_G[x_i]]$ for $i=1,\ldots,k'$. We can
write $H_r=G[N_G[x]]$ for some simplicial vertex $x$ of $G$. 

Case (I): Assume that $V(H_r) \setminus \{x\} \subset N_G(D_1)$. Then, $N_G(D_1)$ is a
vertex cover of $G$. Indeed, take any edge $e$ of $G$. If $e\bigcap V(H_r)=\emptyset$, then
$e$ is an edge of $G_1$ and is covered by $N_{G_1}(D_1)$. 
Assume that $e\cap V(H_r) \neq \emptyset$. If $x \notin e$, then there
is $z \in e$ with $z \in V(H_r)\setminus\{x\} \subset N_G(D_1)$. Now, if $x \in e$
then $e=\{x,z\}$ with $z \in N_G[x]\setminus\{x\}=V(H_r)\setminus\{x\} \subset N_G(D_1)$. 
This proves that $N_G(D_1)$ is a vertex cover of
$G$. Hence, 
by Lemma~\ref{stable}, $D_1 \in  {\mathcal A}_G$ and, noticing that
$P_1$ is an induced matching of $G$, this case follows by making
$D=D_1$ and $P=P_1$. 

Case (II): Assume that there is $y\in V(H_r) \setminus \{x\}$  such that $y \notin N_G(D_1)$. 
Then, $D_2:=D_1 \bigcup \{y\}$ is a stable set of $G$. Also, $ N_G(D_2)$ is a vertex cover of $G$,
since $N_{G_1}(D_1)$ is a vertex cover of $G_1$, $H_r$ is a complete
subgraph of $G$, and $V(H_r) \subset N_G[y]$.
Thus, by Lemma~\ref{stable}, $D_2$ is in ${\mathcal A}_G$. We set $x_{k'+1}:=x$, 
$y_{k'+1}:=y$ and $H_{j_{k'+1}}:=H_r$. Then,  $\{x_{k'+1}, y_{k'+1}\} \in  E(H_r)$ and 
$P_2:=P_1 \bigcup \{\{x_{k'+1}, y_{k'+1}\} \}$ is an induced matching of $G$, 
since $P_1$ is an induced matching of $G_1$, $y \in V(H_r)\setminus N_G(D_1)$
and $H_{j_i}=G[N_G[x_i]]$, for $i=1, \ldots, k'+1$ . 
Therefore, this case follows by making $D=D_2$ and $P=P_2$.

The equality $|D|=|P|$ is clear.
The inequality ${\rm v}(I(G))\leq |D|$ follows from Theorem~\ref{v(I)} 
and $|P|\leq{\rm im}(G)$ is clear by definition of ${\rm
im}(G)$. Finally, the inequality ${\rm im}(G)\leq{\rm
reg}(S/I(G))$ follows directly from Theorem~\ref{lower-bound-reg}. 
\end{proof}

\begin{corollary}\label{simplicial-4-5-cycles}
Let $G$ be a well-covered graph and let $I(G)$ be its edge ideal. If $G$ is simplicial
or $G$ is connected and contain neither $4$- nor $5$-cycles, then 
$${\rm v}(I(G))\leq{\rm im}(G)
\leq{\rm reg}(S/I(G))\leq\beta_0(G).$$
\end{corollary}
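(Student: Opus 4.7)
The plan is to reduce both statements to a single application of Theorem~\ref{Domi-InduceMatch-simplex}, which supplies the two inequalities ${\rm v}(I(G))\leq{\rm im}(G)\leq{\rm reg}(S/I(G))$ whenever $G$ admits a simplicial partition. So the real work is to produce a simplicial partition in each of the two listed cases; the outermost bound ${\rm reg}(S/I(G))\leq\beta_0(G)$ is then a general fact about squarefree monomial ideals that is independent of the hypotheses. Specifically, since $I(G)$ is squarefree, \cite[Proposition~3.2]{v-number} (already invoked in the proof of Proposition~\ref{reg-dim-pol}) yields ${\rm reg}(S/I(G))\leq\dim(S/I(G))=|V(G)|-{\rm ht}\,I(G)=|V(G)|-\alpha_0(G)=\beta_0(G)$.

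For the simplicial case I would invoke the classical characterization from the well-covered graph literature (Prisner--Topp--Vestergaard and related work) stating that a simplicial graph is well-covered if and only if $V(G)$ admits a partition into the closed neighborhoods of simplicial vertices. Concretely, one starts with any maximum independent set $A$ of $G$ and upgrades its elements one at a time to simplicial vertices: if $v\in A$ is non-simplicial, it is adjacent to a simplicial vertex, and a short exchange argument lets us substitute a simplicial vertex for $v$ without losing independence or maximality. Once $A=\{v_1,\dots,v_k\}$ consists of simplicial vertices, well-coveredness (together with the fact that each $N_G[v_i]$ is a clique) forces the closed neighborhoods $N_G[v_1],\dots,N_G[v_k]$ to be pairwise disjoint: a vertex lying in $N_G[v_i]\cap N_G[v_j]$ with $i\neq j$ would, together with $A\setminus\{v_i,v_j\}$, produce an independent set of size strictly greater than $|A|=\beta_0(G)$. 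The simplexes $G[N_G[v_i]]$ then form the required simplicial partition.

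For the case where $G$ is connected with no $4$- or $5$-cycles, I would appeal directly to Theorem~\ref{wellcovered-characterization1}, which leaves three possibilities: $G\in\mathcal{F}$, $G=C_7$, or $G=T_{10}$. If $G\in\mathcal{F}$ the defining decomposition $\{N_G[x_i]\}$ of $V(G)$ is by construction a simplicial partition (with the additional property $|N_G[x_i]|\leq 3$, which is not needed here), so Theorem~\ref{Domi-InduceMatch-simplex} applies. The two sporadic graphs $C_7$ and $T_{10}$ are handled by the direct computations of Example~\ref{example2}: the values of ${\rm v}(I(G))$, ${\rm im}(G)$, and ${\rm reg}(S/I(G))$ are explicitly listed there and are seen to satisfy the chain of inequalities by inspection.

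The only step that is not entirely formal is the production of the simplicial partition in the first case, where the exchange argument upgrading a maximum independent set to one of simplicial vertices and the subsequent disjointness verification have to be done carefully. Once this partition is in hand, the combinatorial content of both cases is absorbed into Theorem~\ref{Domi-InduceMatch-simplex}, and the final inequality ${\rm reg}(S/I(G))\leq\beta_0(G)$ is the general bound for squarefree monomial ideals recalled above.
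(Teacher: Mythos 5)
Your overall route coincides with the paper's: both cases are funneled into Theorem~\ref{Domi-InduceMatch-simplex} by exhibiting a simplicial partition, the sporadic graphs $C_7$ and $T_{10}$ from Theorem~\ref{wellcovered-characterization1} are disposed of by the computations of Example~\ref{example2}, and the outer bound ${\rm reg}(S/I(G))\leq\beta_0(G)$ comes from the squarefree inequality ${\rm reg}\leq\dim$ (which the paper packages as Proposition~\ref{reg-dim-pol}). The one place you genuinely diverge is the construction of the simplicial partition when $G$ is simplicial: the paper takes \emph{all} simplicial vertices $z_1,\dots,z_\ell$, notes $V(G)=\bigcup_i N_G[z_i]$, and quotes \cite[Lemma~2.4]{Finbow2} to conclude that in a well-covered graph these closed neighborhoods are pairwise equal or disjoint, whence a subfamily partitions $V(G)$. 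You instead build the partition from a maximum independent set $A$ upgraded to consist of simplicial vertices; your exchange step is sound (if $v\in A$ is adjacent to a simplicial $w$, then $w$ cannot be adjacent to any other member of $A$, since $N_G[w]$ is a clique containing $v$), and the covering $\bigcup_i N_G[v_i]=V(G)$ follows from maximality of $A$. However, your disjointness verification is stated backwards: for $u\in N_G[v_i]\cap N_G[v_j]$ the set $(A\setminus\{v_i,v_j\})\cup\{u\}$ has cardinality $|A|-1$, which is not ``strictly greater than $|A|$.'' The correct contradiction is that, letting $L=\{l\mid u\in N_G[v_l]\}$ (so $|L|\geq 2$), the set $B=(A\setminus\{v_l\mid l\in L\})\cup\{u\}$ is a \emph{maximal} independent set --- any vertex adjacent to a deleted $v_l$ lies in the clique $N_G[v_l]$, which contains $u$, and hence is adjacent to $u$, while $u$ is nonadjacent to the surviving $v_m$ by definition of $L$ --- and $|B|=|A|-|L|+1<\beta_0(G)$, contradicting well-coveredness. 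With that inequality reversed, your argument is a valid self-contained substitute for the citation of \cite[Lemma~2.4]{Finbow2}, and the rest of the proposal matches the paper's proof.
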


\begin{proof} Assume that $G$ is simplicial. Let $\{z_1,\ldots,z_\ell\}$ be
the set of all simplicial vertices of $G$. Then $V(G)=\bigcup_{i=1}^\ell
N_G[z_i]$. As $G$ is well-covered, by \cite[Lemma~2.4]{Finbow2}, for
$1\leq i<j\leq \ell$ either $N_G[z_i]=N_G[z_j]$ or 
$N_G[z_i]\bigcap N_G[z_j]=\emptyset$. Thus there are simplicial
vertices $x_1,\ldots,x_k$ of $G$ such that $\{N_G[x_i]\}_{i=1}^k$ is a
partition of $V(G)$. Setting $H_i=G[N_G[x_i]]$ for $i=1,\ldots,k$ and
applying Theorem~\ref{Domi-InduceMatch-simplex}, we get that ${\rm
v}(I(G))\leq{\rm im}(G)\leq{\rm reg}(S/I(G))$.  Noticing that
$\dim(S/I(G))=\beta_0(G)$, the inequality ${\rm
reg}(S/I(G))\leq\beta_0(G)$ follows from
Proposition~\ref{reg-dim-pol}. 

Next assume that $G$ is connected and contain neither $4$- nor $5$-cycles. Then, by 
Theorem~\ref{wellcovered-characterization1}, 
$G\in \{C_7,T_{10}\}$ or $G\in\mathcal{F}$.
The cases $G = C_7$ or $G=T_{10}$ are treated in
Example~\ref{example2} (cf. Theorem~\ref{cycles-indmat}). 
If $G\in\mathcal{F}$, then there exists
$\{x_1,\ldots,x_k\}\subset V(G)$ where 
for each $i$, $x_i$ is
simplicial, $|N_G[x_i]|\leq 3$ and $\{N_G[x_i]\mid i=1,\ldots,k\}$ is
a partition of $V(G)$. In particular $G$ is simplicial and the
asserted inequalities follow from the first part of the proof.
\end{proof}

\begin{proposition}\label{Shedding-stable}
Let $G$ be a graph. The following conditions are equivalent.
\begin{enumerate}
\item Every vertex of $G$ is a shedding vertex.
\item ${\mathcal A}_{G}={\mathcal F}_{G}$.
\end{enumerate}
\end{proposition}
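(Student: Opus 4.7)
The plan is to prove both implications by contradiction, after reformulating $\mathcal{A}_G$ combinatorially. First observe that if $A$ is a stable set of $G$ and $N_G(A)$ is a vertex cover of $G$, then $N_G(A)$ is automatically a \emph{minimal} vertex cover: for each $x\in N_G(A)$ there exists $a\in A$ with $\{x,a\}\in E(G)$, and since $A\cap N_G(A)=\emptyset$, the edge $\{x,a\}$ is not covered by $N_G(A)\setminus\{x\}$. Hence $A\in\mathcal{A}_G$ if and only if $A$ and $V(G)\setminus N_G(A)$ are both stable, and $A\in\mathcal{A}_G\setminus\mathcal{F}_G$ corresponds precisely to the case that $B_A:=V(G)\setminus(A\cup N_G(A))$ is non-empty and stable.

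For $(2)\Rightarrow(1)$, fix $v\in V(G)$ and a stable set $S$ of $G\setminus N_G[v]$. If $S$ is not maximal in $G\setminus N_G[v]$, any enlargement inside $G\setminus N_G[v]$ is still a stable set of $G\setminus v$, so $S$ is not maximal there; hence assume $S$ is maximal in $G\setminus N_G[v]$. Then $S\cup\{v\}$ is a maximal stable set of $G$: any $u\notin S\cup\{v\}$ is either in $N_G(v)$ (blocked by $v$) or lies in $V(G)\setminus N_G[v]\setminus S$ (adjacent to $S$ by maximality). Suppose for contradiction $S$ is maximal in $G\setminus v$; then $V(G)\setminus\{v\}\setminus S\subset N_G(S)$, which forces $N_G(v)\subset N_G(S)$ and consequently that $N_G(S)$ is a vertex cover of $G$. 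By the reformulation, $S\in\mathcal{A}_G=\mathcal{F}_G$, but $S\cup\{v\}$ is a strictly larger stable set, contradicting maximality of $S$.

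For $(1)\Rightarrow(2)$, pick $A\in\mathcal{A}_G\setminus\mathcal{F}_G$, so $B_A$ is non-empty and stable. Choose $v\in B_A$; since $v\notin A$ and $A\cap N_G(v)=\emptyset$, the set $A$ is stable in $G\setminus N_G[v]$, so we can extend it to a stable set $S\supset A$ that is maximal in $G\setminus N_G[v]$. Applying the shedding property of $v$ to $S$, there is $u\in V(G)\setminus\{v\}\setminus S$ not adjacent to $S$; the maximality of $S$ in $G\setminus N_G[v]$ forbids $u\in V(G)\setminus N_G[v]$, so $u\in N_G(v)$. Since $u$ is not adjacent to $A\subset S$, $u\notin N_G(A)$; and since $v\notin N_G(A)$, no vertex of $A$ is adjacent to $v$, so $u\in N_G(v)$ forces $u\notin A$. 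Thus $u\in B_A$, and the edge $\{u,v\}$ lies inside $B_A$, contradicting its stability. The main obstacle is this last direction: taking $S=A$ directly only produces some $u\in B_A\setminus\{v\}$ with no forced adjacency to $v$, so it is essential to first enlarge $A$ to a maximal stable set of $G\setminus N_G[v]$---this is what compels the extending vertex to lie in $N_G(v)$ and yields the offending edge inside $B_A$.
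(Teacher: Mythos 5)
Your proof is correct and follows essentially the same route as the paper's: both directions rest on the observation (Lemma~\ref{stable}) that a vertex-covering neighborhood $N_G(A)$ of a stable set is automatically a minimal vertex cover, combined with the decomposition $V(G)=A\sqcup N_G(A)\sqcup B_A$ and the inclusion $N_G(v)\subset N_G(S)$. The only cosmetic difference is in $(1)\Rightarrow(2)$, where the paper directly exhibits $\bigl(V(G)\setminus N_G(A)\bigr)\setminus\{x\}$ as a maximal stable set of $G\setminus x$ avoiding $N_G[x]$, whereas you first enlarge $A$ to a maximal stable set of $G\setminus N_G[v]$ and let the shedding hypothesis produce the offending edge inside $B_A$.
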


\begin{proof}
(1) $\Rightarrow$ (2): The inclusion ${\mathcal
A}_{G}\supset{\mathcal F}_{G}$ follows from Theorem~\ref{v(I)}. To
show the inclusion ${\mathcal
A}_{G}\subset{\mathcal F}_{G}$ we proceed by contradiction, suppose
there is $D \in {\mathcal A}_{G} \setminus {\mathcal F}_{G}$. 
 Then, $D$ is a stable set of $G$ and $N_G(D)$ is a vertex cover of $G$. 
Thus,  $D \bigcap N_G(D) = \emptyset$. Furthermore, since $D \notin {\mathcal F}_{G}$,
there is $x \in V(G) \setminus D$ such that $D \bigcup \{x\}$ is a stable
set of $G$. Then, $x \notin N_G(D)$. But $N_G(D)$ is a vertex cover
of $G$,  then 
$N_G(x) \subset N_G(D)$ and $A:=V(G)\setminus N_G(D)$ is a stable set
of $G$.
So, $A \subset V(G)\setminus N_G(x)$ and $A':=A \setminus x$  is a stable set of $V(G)\setminus N_G[x]$.
Now, we prove  $A'$ is a maximal stable set of $G \setminus x$.
By contradiction assume there is $a \in V(G \setminus x) \setminus A'$, such that 
$A' \bigcup \{a\}$ is a stable set. Then, $a \in N_G(D)$, since  $V(G)=A  \bigcup N_G(D)$. 
Also, $D\subset A'$, since  $D \bigcap N_G(D) = \emptyset$ and
$x\notin D$, a contradiction, since $a \in N_G(D)$ and $A' \bigcup \{a\}$ is a stable set. 
 Hence, $A'$ is a maximal stable set of  $G\setminus x$. 
Therefore $x$ is not a shedding vertex of $G$, a contradiction. 

(2) $\Rightarrow$ (1): By contradiction, suppose there is $x \in V(G)$
such that $x$ is not a shedding vertex. 
Thus, there is a maximal stable set $A$ of $G \setminus x$ such that  $A \subset V(G) \setminus N_G[x]$ .
Then, $C:=V(G\setminus x)\setminus A$  is a minimal vertex cover of $G\setminus x$
and $A \bigcup \{x\}$ is a stable set of $G$. So, $A \notin {\mathcal F}_G$.
Since $C$ is a minimal vertex cover of $G\setminus x$, we have that for each $z \in C$, there is
$z' \in V(G \setminus x) \setminus C = A$ such that $\{z,z'\} \in E(G)$. Consequently,
$C \subset N_G(A)$. Furthermore, if $a \in N_G(x)$, then $a \in G \setminus x$
and $a \notin A$. Thus, $a \in N_G(A)$, since $A$ is a maximal stable set of $G \setminus x$.
Hence, $N_G(x) \subset  N_G(A)$. This implies, $N_G(A)$ is a vertex cover of $G$, since
$C \subset N_G(A)$. Therefore, by Lemma~\ref{stable}, 
$A \in {\mathcal A}_G$, a contradiction since $A \notin {\mathcal F}_G$.
\end{proof}

\begin{lemma}\cite[cf. Corollary
3.3]{Levit-Mandrescu}\label{shedding-w2} If $G\in W_2$, then
every $v\in V(G)$ is a shedding vertex.
\end{lemma}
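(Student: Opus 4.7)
The plan is to show directly that for any vertex $v\in V(G)$, every stable set $A$ of $G\setminus N_G[v]$ fails to be a maximal stable set of $G\setminus v$, which is precisely the shedding condition. The strategy is to invoke the $W_2$ property in order to produce a vertex $z\neq v$ with $z\notin A\cup N_G(A)$, so that $A\cup\{z\}$ is a strictly larger stable set of $G\setminus v$.

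First I would observe that since $A\subset V(G)\setminus N_G[v]$, we have $v\notin A$ and no vertex of $A$ is adjacent to $v$; in particular $A$ and $\{v\}$ are disjoint stable sets of $G$, and $A\cup\{v\}$ is itself a stable set. Applying the $W_2$ hypothesis to the pair $A,\{v\}$ yields disjoint maximum stable sets $B_1,B_2$ of $G$ with $A\subset B_1$, $v\in B_2$, and $|B_1|=|B_2|=\beta_0(G)$. By disjointness, $v\notin B_1$.

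The crucial numerical point is that $|B_1|=\beta_0(G)\geq|A\cup\{v\}|=|A|+1$, so $B_1\setminus A\neq\emptyset$. Any $z\in B_1\setminus A$ satisfies $z\neq v$ (because $v\notin B_1$), $z\notin A$, and $z\notin N_G(A)$ (because $B_1$ is stable and contains both $A$ and $z$). Therefore $A\cup\{z\}$ is a stable set of $G$, and since $z\neq v$ it is also a stable set of $G\setminus v$ strictly containing $A$. Hence $A$ is not a maximal stable set of $G\setminus v$, and $v$ is a shedding vertex.

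I do not anticipate a genuine obstacle here: once one recognises that the right pair of disjoint stable sets to feed into the $W_2$ property is $(A,\{v\})$, the rest is bookkeeping. The only delicate points are verifying that $A\cap\{v\}=\emptyset$, which is immediate from $A\subset V(G)\setminus N_G[v]$, and that the witness $z$ obtained is distinct from $v$, which is forced by the disjointness of $B_1$ and $B_2$.
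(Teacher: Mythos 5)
Your proof is correct, and it takes a genuinely different and more elementary route than the paper's. The paper argues by contradiction: assuming a stable set $A$ of $G\setminus N_G[v]$ were maximal in $G\setminus v$, it invokes the characterization that $G\in W_2$ forces $G$ and $G\setminus v$ to be well-covered (so $|A|=\beta_0(G\setminus v)=\beta_0(G)$), and then appeals to Pinter's theorem \cite[Theorem~5]{Pinter-jgt} that $G\setminus N_G[v]$ is again in $W_2$ with $\beta_0(G\setminus N_G[v])=\beta_0(G)-1$, contradicting the existence of a stable set of size $\beta_0(G)$ there. You instead apply the definition of $W_2$ directly to the disjoint pair $(A,\{v\})$, use the observation that $A\cup\{v\}$ is stable to get $|B_1|=\beta_0(G)\ge |A|+1$, and exhibit an explicit vertex $z\in B_1\setminus A$ with $z\ne v$ extending $A$ inside $G\setminus v$. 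Your argument is self-contained (no external structure theorems about $W_2$ graphs are needed) and constructive rather than by contradiction; the paper's version, by contrast, leans on known machinery and is shorter once that machinery is granted. The only point worth adding to your write-up is the degenerate case $A=\emptyset$: there the conclusion is immediate because $|V(G)|\ge 2$ guarantees $G\setminus v$ has a vertex, so $\emptyset$ is not a maximal stable set of $G\setminus v$, and one need not worry about whether the $W_2$ property is being applied to an empty stable set.
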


\begin{proof} Let $v$ be a vertex of $G$. We may assume that $G$ is
not a complete graph. Let $A$ be a stable set of 
$G_v:=G\setminus N_G[v]$. We proceed by contradiction assuming that
$A$ is a maximal stable set of $G\setminus v$. Then, as $G$ and
$G\setminus v$ are well-covered, we get  
$$\beta_0(G)=\beta_0(G\setminus v)=|A|.$$
\quad According to \cite[Theorem~5]{Pinter-jgt}, the graph $G_v$ is in $W_2$ and
 $\beta_0(G_v)=\beta_0(G)-1$. In particular $G_v$ is well-covered and
 $\beta_0(G_v)=\beta_0(G)-1$ (cf. Theorem~\ref{Campbell-theo}).  
But $A$ is a stable set of $G_v$ and $\vert A \vert =\beta_0(G)$, a contradiction.
\end{proof}

\begin{corollary}\cite[Theorem~4.5]{v-number}\label{w2-graph}
If $G$ is a $W_2$-graph and $I=I(G)$, then ${\rm v}(I)=\beta_{0}(G)$.
\end{corollary}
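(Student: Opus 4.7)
The plan is to assemble the result directly from the three preceding tools: Lemma~\ref{shedding-w2}, Proposition~\ref{Shedding-stable}, and Theorem~\ref{v(I)}. Since $G\in W_2$, by Lemma~\ref{shedding-w2} every vertex of $G$ is a shedding vertex, so Proposition~\ref{Shedding-stable} gives the equality $\mathcal{A}_G=\mathcal{F}_G$.

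Next I would use that $G\in W_2$ forces $G$ to be well-covered with no isolated vertices (this is the cited characterization \cite[Theorem~2.2]{Levit-Mandrescu} recalled in the introduction). Consequently every element of $\mathcal{F}_G$ has size exactly $\beta_0(G)$, so
$$
\min\{|A|\colon A\in\mathcal{F}_G\}=\beta_0(G).
$$

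Finally, Theorem~\ref{v(I)} expresses $\mathrm{v}(I(G))$ as $\min\{|A|\colon A\in\mathcal{A}_G\}$, and substituting $\mathcal{A}_G=\mathcal{F}_G$ yields $\mathrm{v}(I(G))=\beta_0(G)$.

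There is essentially no obstacle here: all of the content sits in the earlier propositions, and the corollary is a one-line assembly once one notes that $W_2$-graphs are well-covered. The only thing worth double-checking is the well-covered hypothesis inside Lemma~\ref{shedding-w2}, which is harmless since $W_2$ graphs satisfy it by definition.
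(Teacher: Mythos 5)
Your proposal is correct and follows essentially the same route as the paper's proof: both combine Theorem~\ref{v(I)}, Lemma~\ref{shedding-w2}, and Proposition~\ref{Shedding-stable} with the fact that $W_2$-graphs are well-covered. No issues.
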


\begin{proof}
By Theorem \ref{v(I)}, there is $D \in {\mathcal A}_G$ such that 
${\rm v}(I)=\vert D \vert$.
Since $G$ is a  $W_2$-graph, by Lemma~\ref{shedding-w2}, every vertex of $G$ is
a shedding vertex.
Thus,  by Proposition \ref{Shedding-stable}, $D \in {\mathcal F}_G$,
i.e., $D$ is a maximal 
stable set of $G$. Furthermore, $G$ is well-covered, since $G$ is a $W_2$-graph. Hence,
$\vert D \vert=\beta_{0}(G)$. Therefore, ${\rm v}(I)=\beta_{0}(G)$.
\end{proof}

\section{Examples}\label{examples-section}
\begin{example}\label{example1}
Let $S=\mathbb{Q}[t_1,t_2,t_3]$ be a polynomial ring and 
$I=(t_1^5,t_2^5,t_2^4t_3^5,t_1^4t_3^5)$. Then an irredundant 
primary decomposition of $I$ is given by
$$
I=(t_1^4,t_2^4)\textstyle\bigcap(t_1^5,t_2^5,t_3^5).
$$
\quad The associated primes of $I$
are $\mathfrak{p}_1=(t_1,t_2)$ and $\mathfrak{p}_2=(t_1,t_2,t_3)$.
Setting $g_1=t_1^4t_2^4$, $g_2=t_1^3t_2^3t_3^5$, and
$g_3=t_1^4t_2^4t_3^4$, and using Procedure~\ref{procedure1}, we get that $(I\colon\mathfrak{p}_1)/I$ and 
$(I\colon\mathfrak{p}_2)/I$ are minimally generated by
$\{\overline{g}_1,\overline{g}_2\}$
and $\{\overline{g}_3\}$, respectively. Using
Theorem~\ref{vnumber-general} and the equalities 
$$
(I\colon g_1)=(t_1,t_2,t_3^5),\ (I\colon g_2)=\mathfrak{p}_1,\ 
(I\colon g_3)=\mathfrak{p}_2,  
$$
we obtain that ${\rm v}(I)=11$. The regularity of the quotient ring
$S/I$ is equal to $12$. 
\end{example}

\begin{example}\label{example-wc}
Let $S=\mathbb{Q}[t_1,\ldots,t_6]$ be a polynomial ring and let $I$ be
the edge ideal
$$
I=(t_1t_2,\, t_2t_3,\, t_3t_4,\, t_1t_4,\, t_1t_5,\, t_2t_5,\,
t_3t_5,\, t_4t_5,\, t_1t_6,\, t_2t_6,\, t_3t_6,\, t_4t_6).
$$
\quad The graph $G$ defined by the generators of this ideal is
well-covered and not very well-covered, $\alpha_0(G)=4$, and ${\rm v}(I)={\rm im}(G)={\rm
reg}(S/I)=1$.
\end{example}

\begin{example}\label{example2}
Let $C_7$ and $T_{10}$ be the well-covered graphs of
Figure~\ref{C7-T10}. Let $R$ and $S$ be polynomial rings over the
field $\mathbb{Q}$ in the
variables $\{t_1,\ldots,t_7\}$ and $\{t_1,\ldots,t_{10}\}$,
respectively. 
Using \textit{Macaulay}$2$ \cite{mac2} and
Procedure~\ref{procedure1} we obtain ${\rm ht}(I(C_7))=\alpha_0(C_7)=4$, ${\rm
pd}(R/I(C_7))=5$ and 
$$
{\rm v}(I(C_7))=2={\rm im}(C_7)={\rm
reg}(R/I(C_7))\leq\dim(R/I(C_7))=\beta_0(C_7)=3.
$$
\quad The neighbor set of $A=\{t_1,t_4\}$ in $C_7$ is
$N_{C_7}(A)=\{t_2,t_3,t_5,t_7\}$ and $N_{C_7}(A)$ is
a minimal vertex cover of $C_7$, that is, $A\in\mathcal{A}_{C_7}$. 
Using \textit{Macaulay}$2$ \cite{mac2} and
Procedure~\ref{procedure1} we obtain ${\rm ht}(I(T_{10}))=\alpha_0(G)=6$, ${\rm
pd}(S/I(T_{10}))=7$ and 
$$
{\rm v}(I(T_{10}))=2={\rm im}(T_{10})\leq{\rm
reg}(S/I(T_{10}))=3\leq\dim(S/I(T_{10}))=\beta_0(T_{10})=4.
$$
\quad The neighbor set of $A=\{t_1,t_4\}$ in $T_{10}$ is
$N_{T_{10}}(A)=\{t_2,t_3,t_5,t_7,t_8,t_{10}\}$ and $N_{T_{10}}(A)$ is
a minimal vertex cover of $T_{10}$, that is, $A\in\mathcal{A}_{T_{10}}$.

\begin{figure}[H]
\begin{tikzpicture}[line width=.5pt,scale=0.75]
		\tikzstyle{every node}=[inner sep=1pt, minimum width=5.5pt] 
\tiny{
\node (1) at (-11,1.5){$\bullet$};
\node (2) at (-9,0.8) {$\bullet$};
\node (3) at (-8.5,-0.5) {$\bullet$};
\node (4) at (-10,-1.5){$\bullet$};
\node (7) at (-13,0.8){$\bullet$};
\node (5) at (-12,-1.5) {$\bullet$};
\node (6) at (-13.5,-0.5) {$\bullet$};
\node at (-11,1.8){$t_{1}$};
\node at (-9,1.1) {$t_{2}$};
\node at (-8.2,-0.5){$t_{3}$};
\node at (-10,-1.8){$t_{4}$};
\node at (-13,1.1) {$t_{7}$};
\node at (-12,-1.8) {$t_{5}$};
\node at (-13.8,-0.5) {$t_{6}$};
\node at (-11,-2.5) {{\large $C_{7}$}};
\draw[-,line width=1pt] (1) to (2);
\draw[-,line width=1pt] (2) to (3);
\draw[-,line width=1pt] (3) -- (4);
\draw[-,line width=1pt] (4) -- (5);
\draw[-,line width=1pt] (5) -- (6);
\draw[-,line width=1pt] (6) -- (7);
\draw[-,line width=1pt] (7) -- (1);
}
\tiny{
\node (1) at (0,2){$\bullet$};
\node (2) at (2,0.8) {$\bullet$};
\node (3) at (2,-0.5) {$\bullet$};
\node (4) at (2,-2){$\bullet$};
\node (7) at (-2,0.8){$\bullet$};
\node (5) at (-2,-2) {$\bullet$};
\node (6) at (-2,-0.5) {$\bullet$};
\node (8) at (0,1){$\bullet$};
\node (9) at (0,0) {$\bullet$};
\node (10) at (0,-1) {$\bullet$};
\node at (-0,2.3){$t_{1}$};
\node at (2,1.1) {$t_{2}$};
\node at (2.3,-0.5){$t_{3}$};
\node at (2,-2.3){$t_{4}$};
\node at (-2,1.1) {$t_{7}$};
\node at (-2,-2.3) {$t_{5}$};
\node at (-2.3,-0.5) {$t_{6}$};
\node at (0.3,1) {$t_{8}$};
\node at (0.3,0) {$t_{9}$};
\node at (0,-1.3) {$t_{10}$};
\node at (-0,-2.8) {{\large $T_{10}$}};
\draw[-,line width=1pt] (1) to (2);
\draw[-,line width=1pt] (2) to (3);
\draw[-,line width=1pt] (3) -- (4);
\draw[-,line width=1pt] (4) -- (5);
\draw[-,line width=1pt] (5) -- (6);
\draw[-,line width=1pt] (6) -- (7);
\draw[-,line width=1pt] (7) -- (1);
\draw[-,line width=1pt] (1) -- (8);
\draw[-,line width=1pt] (8) -- (9);
\draw[-,line width=1pt] (9) -- (10);
\draw[-,line width=1pt] (5) -- (10);
\draw[-,line width=1pt] (4) -- (10);
}
\end{tikzpicture}
\caption{Two well-covered graphs with no $4$- or
$5$-cycles\quad\quad\quad\quad\quad}\label{C7-T10}
\end{figure}
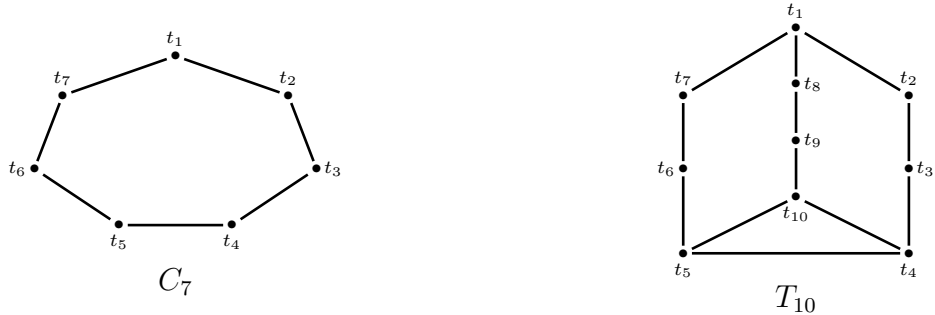
\end{example}

\begin{example}\label{example-w2}
Let $G$ be the graph consisting of two disjoint $3$-cycles with
vertices $x_1,x_2,x_3$ and $y_1,y_2,y_3$. Take two disjoint
independent sets of $G$, say $A_1=\{x_1\}$ and $A_2=\{y_1\}$, to
verify that $G$ is a graph in $W_2$ note that $B_1=\{x_1,y_2\}$
and $B_2=\{y_1,x_2\}$ are maximum independent sets of $G$ containing 
$A_1$ and $A_2$ and $|B_i|=\beta_0(G)=2$.
\end{example}

\begin{appendix}

\section{Procedures}\label{Appendix}

\begin{procedure}\label{procedure1}
Computing the v-number and other invariants of a graded ideal $I$
with \textit{Macaulay}$2$ \cite{mac2}.  This procedure corresponds to
Example~\ref{example1}. One can compute other examples by changing the
polynomial ring $S$ and the generators of the ideal $I$.
\begin{verbatim}
S=QQ[t1,t2,t3]
I=ideal(t1^5,t2^5,t2^4*t3^5,t1^4*t3^5)
--This gives the dimension and the height of I
--If I=I(G), G a graph, this gives the stability 
--number and the covering number of G 
dim(I), codim I
--This gives the associated primes of I
--If I=I(G), this gives the minimal vertex covers of G 
L=ass I
--This determines whether or not I has embedded primes
--If I=I(G), this determines whether or not G is well covered
apply(L,codim)
p=(n)->gens gb ideal(flatten mingens(quotient(I,L#n)/I))
--This computes a minimal generating set for (I:p)/I
MG=(n)->flatten entries  p(n)
MG(0), MG(1)
--This gives the list of all minimal generators g of
--(I:p)/I such that (I: g)=p   
F=(n)->apply(MG(n),x-> if not quotient(I,x)==L#n then 0 
 else x)-set{0}
F(0), F(1)
--This computes the v-number of a graded ideal I
vnumber=min flatten degrees ideal(flatten apply(0..#L-1,F))
M=coker gens gb I
regularity M
--This gives the projective dimension of S/I
pdim M
\end{verbatim}
\end{procedure}

\end{appendix}


\section*{Acknowledgments} 
We used \textit{Macaulay}$2$ \cite{mac2} 
to implement the algorithm to compute the v-number of
graded ideals and to compute other algebraic invariants. 


\bibliographystyle{plain}

\end{document}